\documentclass[12pt]{amsart}
\usepackage{amssymb,latexsym}
\usepackage{amsfonts}
\usepackage{amsmath}
\usepackage[colorlinks,linkcolor=blue,anchorcolor=blue,citecolor=blue]{hyperref}
\usepackage{algorithm}
\usepackage{enumerate}
\usepackage{algpseudocode}
\usepackage{verbatim}
\usepackage{graphicx}

\newcommand\F{{\mathbb F}}

\newcommand\ord{\mathrm{ord}}
\newcommand\lcm{\mathrm{lcm}}

\newcommand\Tr{\mathrm{Tr}}

\newtheorem{theorem}{Theorem}[section]
\newtheorem{lemma}[theorem]{Lemma}

\newtheorem{corollary}[theorem]{Corollary}

\theoremstyle{definition}

\newtheorem{remark}[theorem]{Remark}
\newtheorem{example}[theorem]{Example}

\numberwithin{equation}{section}


\begin{document}

\title[]{On the generalized Fibonacci sequence of polynomials over finite fields}

\author{Zekai Chen}
\address{School of Mathematical Sciences, South China Normal University, Guangzhou 510631, China}
\email{chenzk@m.scnu.edu.cn}

\author{Min Sha}
\address{School of Mathematical Sciences, South China Normal University, Guangzhou 510631, China}
\email{min.sha@m.scnu.edu.cn}

\author{Chen Wei}
\address{School of Mathematical Sciences, South China Normal University, Guangzhou 510631, China}
\email{20202232017@m.scnu.edu.cn}



\subjclass[2010]{11B39, 11T06, 11B50}



\keywords{Generalized Fibonacci sequence, period, rank, polynomial, finite field}

\begin{abstract}
In this paper, as an analogue of the integer case, we study detailedly the period and the rank of the generalized Fibonacci sequence of polynomials over a finite field modulo an arbitrary polynomial. We establish some formulas to compute them, and we also obtain some properties about the quotient of the period and the rank. 
We find that the polynomial case is much more complicated than the integer case. 
\end{abstract}

\maketitle

\section{Introduction}

\subsection{Motivation}

The famous Fibonacci sequence $\{0, 1, 1, 2, 3, 5,\ldots\}$ has been studied for a very long time and from various aspects, 
and its study is still a very active area in number theory. 
One natural topic is to study the properties of the Fibonacci sequence under a modulus. 
It is well-known that the Fibonacci sequence is periodic modulo any integer $m$; see \cite{Hall, Rob, Vin, Wall} for more properties. 
In addition, mathematicians have also studied the periodic properties of the generalized Fibonacci sequences 
(that is, the second-order linear recurrence sequences with initial terms $0$ and $1$) under a modulus; 
see \cite{Car, Eng, Gup, Li, Lucas, Ren, Vel, Vel2, Ward} for more details. 
The two quantities studied most extensively about periodic properties are the period and the rank. 

As an analogue of the integer case, in this paper we study the periodic properties of 
 the generalized Fibonacci sequence of polynomials over finite fields. 
 
Note that the generalized Fibonacci sequences are second-order linear recurrence sequences. 
Another motivation for our work is the study of the periodicity of linear recurrence sequences over number fields; 
see Chapter 3 of \cite{EPSW} for a detailed exposition and references therein. 
One application of this study is constructing pseudorandom number generators used in cryptography. 
 
We also remark that  the generalized Fibonacci sequence of polynomials over the the rational numbers (whose definition has the same manner as the one below)
has been introduced recently and studied extensively (such as, combinatorics identities, divisibility properties, irreducibility properties, and so on); 
see \cite{FHM, FHR, FMM, FS} and the references therein.

\subsection{Our situation}

Let $\F_q$ be the finite field of $q$ elements, where $q = p^l$ for some prime $p$ and some integer $l > 0$. 
Let $\F_q[x]$ be the polynomial ring of one variable over $\F_q$. 
Throughout this paper, we fix two polynomials $a, b \in \F_q[x]$ with $b \ne 0$. 
The \textit{generalized Fibonacci sequence}, denoted by $F$, of polynomials over $\F_q$ and with respect to $a$ and $b$, is defined to be 
$$
F_0 = 0, F_1 = 1, F_n = aF_{n-1} + bF_{n-2}, n=2, 3, \ldots. 
$$
The famous sequence of \textit{Fibonacci polynomials} corresponds to the case when $a=x$ and $b=1$. 
We remark that in \cite{Ren} the integer analogue of this sequence is called the $(a,b)$-Fibonacci sequence. 

Let $M \in \F_q[x]$ be a non-constant polynomial which is relatively prime to $b$ (that is, $\gcd(b,M)=1$). 
Then, the sequence $F$ becomes a periodic sequence modulo $M$. 
In this paper, we study the period and the rank of the sequence $F$ modulo $M$. 

The \textit{rank of apparition}, or simply \textit{rank} of $F$ is the least positive integer $n$ such that $F_n \equiv 0 \pmod{M}$, 
and we denote the rank by $\alpha(M)$. 

We denote the \textit{period} of $F \pmod{M}$ by $\pi(M)$. 
That is, $\pi(M)$ is the least positive integer $n$ such that $F_n \equiv 0 \pmod{M}$ and $F_{n+1} \equiv 1 \pmod{M}$. 

We also define $\beta(M) = \pi(M)/\alpha(M)$. We shall see soon that $\beta(M)$ is always an integer.  
Then, $\beta(M)$ is exactly the number of zero terms in one (minimal) period of the sequence $F$ modulo  $M$. 

The aim of this paper is to study the computations and the arithmetic properties of the three functions: $\alpha(M), \pi(M), \beta(M)$. 
Some of the techniques are similar as the integer case (such as the matrix methods, which was first introduced by Robinson \cite{Rob}, see also \cite{Ren}), 
and some of them are quite different from the integer case (for instance, see the proof of Theorem~\ref{thm:ei'}). 
In a word, the polynomial case is much more complicated than the integer case (for instance, see Theorem~\ref{thm:ei'}). 

We collect some preliminary results in Section~\ref{sec:pre}. 
In Section~\ref{sec:M} we transfer the problem about a general polynomial $M$ to that about $P^e$, a power of an irreducible polynomial. 
We then study  $\alpha(P^e)$ and  $\pi(P^e)$ in Section~\ref{sec:Pe} (see Theorems~\ref{thm:alpha} and \ref{thm:pi}) and essentially reduce the problem to compute 
$\alpha(P)$ and  $\pi(P)$, which is considered in Section~\ref{sec:P}. 
Finally, in Section~\ref{sec:beta} we study the function $\beta$.

\section{Preliminaries}   \label{sec:pre}

\subsection{Matrix representation}
First we denote by $U$ the matrix 
$$
U=\begin{bmatrix}
    0&1\\
    b&a
\end{bmatrix}, 
$$ 
and let $I$ be the identity matrix. 
Similar as the integer case, it is easily confirmed by induction that for any integer $n \ge 1$, we have 
$$
U^n=\begin{bmatrix}
    bF_{n-1}&F_n\\
    bF_n&F_{n+1}
\end{bmatrix}.
$$

Note that if $F_n \equiv 0 \pmod{M}$, then $F_{n+1} = aF_n + bF_{n-1} \equiv bF_{n-1} \pmod{M}$. 
So,  by definition we directly have $U^{\pi(M)} \equiv I \pmod{M}$, and $U^{\alpha(M)} \equiv sI \pmod{M}$ for some polynomial $s \in \F_q[x]$ 
satisfying $\gcd(s,M)=1$. 
In other words, $U^{\alpha(M)}$ is a scalar invertible matrix modulo $M$. 

Indeed, for the above polynomial $s$,  if $\gcd(s,M) \ne 1$, then there is an irreducible polynomial $P \in \F_q[x]$ satisfying $P \mid \gcd(s,M)$ such that 
$U^{\alpha(M)} \equiv 0 \pmod{P}$, and this together with $\gcd(b,M)=1$ gives that $F_n \equiv 0 \pmod{P}$ for any integer $n \ge 0$, 
which contradicts with $F_1 = 1$. Hence, we must have $\gcd(s,M)=1$. 

Notice that the determinant $\det{U}=-b$. 
Thus, we obtain
$$
(-b)^{\pi(M)}=(\det U)^{\pi(M)}=\det U^{\pi(M)} \equiv 1\pmod{M},
$$ 
which gives
$$ 
\ord_M (-b) \mid \pi(M). 
$$ 
Here $\ord_M (-b)$ is the multiplicative order of $-b$ modulo $M$. 
This implies that if the characteristic $p$ is odd and $b=1$, then $\pi(M)$ is even. 

We claim that 
\begin{equation}  \label{eq:UnI}
U^{n} \equiv I \pmod{M} \iff \pi(M) \mid n.
\end{equation}
Indeed, if $\pi(M) \mid n$, then there exists a positive integer $k$ such that $n=k \pi (M) $, so $U^n=U^{k \pi(M)}=(U^{\pi(M)})^k \equiv I^k \equiv  I \pmod{M}$.
Conversely, if $U^n \equiv I \pmod{M}$, by contradiction we assume that $\pi(M) \nmid n$, which means that there exist two positive integers $d,r$ such that $n=\pi(M)d + r$ with $0 < r < \pi(M)$. 
Then $U^n=U^{\pi(M)d + r}=(U^{\pi(M)})^d U^r$. Due to $U^n \equiv U^{\pi(M)} \equiv I \pmod{M}$, we have $U^r \equiv I  \pmod{M}$.
Since $0 < r < \pi(M)$, this contradicts the choice of $\pi(M)$. 
So, we must have $\pi(M) \mid n$. 

We remark that \eqref{eq:UnI} is equivalent to that 
\begin{equation}  \label{eq:Fn01-M}
F_n \equiv 0 \text{ and } F_{n+1} \equiv 1 \pmod{M} \iff \pi(M) \mid n.
\end{equation}

Similarly, we can obtain 
\begin{equation}  \label{eq:Uns}
U^{n} \text{ is a scalar matrix modulo $M$}  \iff \alpha(M) \mid n, 
\end{equation}
which is equivalent to that 
\begin{equation}\label{eq:Fn0-M}
F_n \equiv 0  \pmod{M} \iff \alpha(M) \mid n.
\end{equation}

Suppose that $U^{\alpha(M)} \equiv sI \pmod{M}$ for some polynomial $s \in \F_q[x]$. 
Then, recalling $\beta(M)= \pi(M)/\alpha(M)$, 
we have 
$$
s^{\beta(M)}I=(sI)^{\beta(M)} \equiv U^{\alpha(M)\beta(M)} \equiv U^{\pi(M)} \equiv I \pmod{M},
$$ 
and so 
\begin{equation}  \label{eq:betas}
\beta(M)= \ord_M(s), \quad \textrm{equivalently,} \quad  \pi(M) = \alpha(M)\ord_M(s).
\end{equation}

\subsection{Arithmetic properties}

Here, we reproduce some arithmetic properties of the sequence $\{F_n\}$ following the integer case. 

Let $\overline{\F}_q$ be the algebraic closure of $\F_q$. 
Let $\F_q(x)$ be the quotient field of $\F_q[x]$, 
    and let $\overline{\F_q(x)}$ be the algebraic closure of $\F_q(x)$. 

As usual, the characteristic polynomial of the sequence $\{F_n\}$ is defined by 
$$
f(X)=X^2-aX-b,
$$
whose discriminant is $\Delta = a^2+4b$. 
Note that $f(X)$ is also the characteristic polynomial of the matrix $U$. 

Recall that $b \ne 0$, and $p$ is the characteristic of $\F_q$. 
We first establish a formula for $F_n$. 

\begin{lemma}  \label{lem:Fn}
Let $\lambda_1, \lambda_2$ be the two roots of $f(X)$ in $\overline{\F_q(x)}$. 
Then, for any integer $n \ge 0$, we have 
 \begin{equation*}
     F_n = 
            \begin{cases}
                 n\lambda_1^{n-1} & \text{if $\lambda_1 = \lambda_2$ $($that is $\Delta = 0)$}, \\
                \dfrac{\lambda_1^n-\lambda_2^n}{\lambda_1-\lambda_2} & \text{if $\lambda_1 \ne \lambda_2$ $($that is $\Delta \ne 0)$}. 
            \end{cases}
    \end{equation*}
\end{lemma}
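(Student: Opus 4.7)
The plan is to verify both closed-form expressions by the standard strategy for second-order linear recurrences: check that the proposed right-hand side agrees with $F_n$ for $n=0$ and $n=1$, and then show that it satisfies the same recurrence $G_n = aG_{n-1} + bG_{n-2}$. Since a second-order linear recurrence is uniquely determined by its first two terms, this forces equality with $F_n$ for every $n\ge 0$. The whole argument takes place in $\overline{\F_q(x)}$, which poses no issue because each $F_n$ already lives in $\F_q[x]$.

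For the case $\Delta \ne 0$, I would set $G_n = (\lambda_1^n - \lambda_2^n)/(\lambda_1 - \lambda_2)$, which makes sense because $\lambda_1 \ne \lambda_2$, and note immediately that $G_0 = 0$ and $G_1 = 1$. To verify the recurrence, I would expand $aG_{n-1}+bG_{n-2}$, group the $\lambda_1$-terms and $\lambda_2$-terms separately, and factor $\lambda_i^{n-2}$ out of each; the fact that $\lambda_i$ is a root of $f(X) = X^2 - aX - b$ gives $a\lambda_i + b = \lambda_i^2$, and the expression collapses to $G_n$.

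For the case $\Delta = 0$, I would first use that $\lambda_1$ is a double root of $f(X)$, so $f(X) = (X - \lambda_1)^2$, which yields the identities $a = 2\lambda_1$ and $b = -\lambda_1^2$; these are valid in every characteristic, including $p=2$, where they simply force $a = 0$. Setting $G_n := n\lambda_1^{n-1}$ (with the natural convention that the integer $n$ is reduced modulo $p$), the initial conditions $G_0 = 0$ and $G_1 = 1$ are immediate. Substituting the above expressions for $a$ and $b$ into $aG_{n-1}+bG_{n-2}$ and factoring out $\lambda_1^{n-1}$ leaves an integer coefficient $2(n-1)-(n-2) = n$, giving $G_n$ on the nose.

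There is no serious obstacle here; the whole lemma is a routine Binet-style verification. The only thing worth flagging is the interpretation in the repeated-root case: the factor $n$ in $n\lambda_1^{n-1}$ must be read as the image of $n\in\Z$ in $\F_q$ (and hence in $\overline{\F_q(x)}$), so that when $p \mid n$ the formula correctly returns $0$. Once this convention is fixed, both cases proceed by a direct induction with no complications.
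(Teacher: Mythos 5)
Your proof is correct, and it takes a slightly different route from the paper's. The paper starts from the general-solution ansatz for a second-order linear recurrence: it writes $F_n = c_1\lambda_1^n + c_2\lambda_2^n$ (respectively $F_n = (c_1 + c_2 n)\lambda_1^n$ in the repeated-root case) and then solves the $2\times 2$ system coming from $F_0 = 0$, $F_1 = 1$ to find $c_1, c_2$. That argument implicitly relies on knowing that these families exhaust the solution space of the recurrence, a fact the paper does not justify (it is standard, but over a field of positive characteristic the repeated-root case is exactly where one should be careful). Your approach inverts this: you take the two candidate closed forms as given, check the initial conditions, and verify by a direct substitution using $a\lambda_i + b = \lambda_i^2$ (respectively $a = 2\lambda_1$, $b = -\lambda_1^2$) that they satisfy $G_n = aG_{n-1} + bG_{n-2}$, then conclude by uniqueness. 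This is more self-contained and sidesteps the solution-space issue entirely, at the cost of not explaining where the formulas come from. Your explicit remark that the integer $n$ in $n\lambda_1^{n-1}$ must be read modulo $p$, and your check that the identities $a = 2\lambda_1$, $b = -\lambda_1^2$ remain valid when $p = 2$, are both welcome points of care that the paper glosses over. Both proofs are complete and of comparable length.
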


\begin{proof}
We first note that both $\lambda_1$ and $\lambda_2$ are non-zero, because $b \ne 0$. 

Suppose $\lambda_1 \ne \lambda_2$ (that is $\Delta \ne 0$). 
    Then, $F_n=c_1 \lambda_1^n+c_2 \lambda_2 ^n$
    for some $c_1,c_2 \in \overline{\F_q(x)},n=0,1,2,\ldots$.
    Since $F_0=0,F_1=1$, 
    we get
    \begin{equation*}
        c_1+c_2=0, \qquad   c_1\lambda_1+c_2\lambda_2=1.
    \end{equation*}
This yields 
    \begin{equation*}
        c_1=\frac{1}{\lambda_1-\lambda_2}, \qquad  c_2=-c_1.
    \end{equation*}
    Thus, 
    \begin{equation*}
    F_n=\frac{\lambda_1^n-\lambda_2^n}{\lambda_1-\lambda_2},\quad n=0,1,2,\ldots.
    \end{equation*}

Now,  we suppose $\lambda_1 = \lambda_2$ (that is $\Delta = 0$). 
    Then, $F_n=(c_1+c_2 n) \lambda_1^n$ for some  $c_1,c_2 \in \overline{\F_q(x)},n=0,1,2,...$.
    Since $F_0=0,F_1=1$, 
    we obtain 
    \begin{equation*}
         c_1=0, \qquad c_2=\frac{1}{\lambda_1}.
    \end{equation*}
    Thus,
    \begin{equation*} 
        F_n=n\lambda_1^{n-1}.
    \end{equation*} 
\end{proof}

In Lemma~\ref{lem:Fn}, one needs to compute the two roots $\lambda_1, \lambda_2$ of the equation $X^2 - aX -b = 0$. 
When the characteristic $p$ is odd, the well-known quadratic formula gives the two roots: 
$$
\frac{a \pm \sqrt{a^2 + 4b}}{2}. 
$$
When $p=2$, the equation $X^2 - aX -b = 0$ becomes $X^2 + aX +b = 0$. If further $a=0$, then the two roots are the same and equal to $\sqrt{b}$. 
If $a \ne 0$, then let $c = b/a^2$ and let $R(c)$ be a root of the equation $X^2 + X + c = 0$, 
and then one can check directly that the two roots of $X^2 + aX +b = 0$ are:
$$
aR(c), \quad a(R(c) +1). 
$$
For the case $p=2$, we refer to \cite[page 51, Exercise 6]{Cox}.

Using Lemma~\ref{lem:Fn} we can determine explicitly when there are zero terms in the sequence $\{F_n\}$. 

\begin{lemma} \label{lem:zero}
Let $\lambda_1, \lambda_2$ be the two roots of $f(X)$ in $\overline{\F_q(x)}$. 
\begin{itemize}
\item[(a)] If $\Delta = 0$, then for any integer $n \ge 1$, $F_n = 0$ if and only if $p \mid n$. 

\item[(b)] If $\Delta \ne 0$, then  $F_n = 0$ for some positive integer $n$ if and only if $\dfrac{a^2}{b} \in \F_q$. 
Moreover, when $\dfrac{a^2}{b} \in \F_q$, we have that $\lambda_2/\lambda_1 \in \F_{q^2}$, 
and that for any integer $n \ge 1$, $F_n=0$ if and only if the order of $\lambda_2/\lambda_1$ divides $n$. 
\end{itemize}
\end{lemma}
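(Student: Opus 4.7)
The plan is to apply Lemma~\ref{lem:Fn} directly and then analyze the zero condition in the two cases separately, with the main work being a descent argument for case (b).

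For case (a), since $\Delta = 0$ we have $F_n = n \lambda_1^{n-1}$. Note that $\lambda_1 \ne 0$ because $\lambda_1^2 = -\lambda_1\lambda_2 = b \ne 0$ (using that $\lambda_1 = \lambda_2$ together with $\lambda_1\lambda_2 = -b$). Hence $F_n = 0$ is equivalent to $n = 0$ in the characteristic $p$ field, which is exactly $p \mid n$. This case is essentially immediate.

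For case (b), since $\Delta \ne 0$ the formula $F_n = (\lambda_1^n - \lambda_2^n)/(\lambda_1 - \lambda_2)$ shows that $F_n = 0$ if and only if $(\lambda_2/\lambda_1)^n = 1$. So a positive $n$ with $F_n = 0$ exists iff $\lambda_2/\lambda_1$ is a root of unity in $\overline{\F_q(x)}$, equivalently iff $\lambda_2/\lambda_1$ is algebraic over $\F_q$ (note $\lambda_2/\lambda_1 \ne 0$, and any element of $\overline{\F_q}^\times$ lies in some $\F_{q^k}^\times$, hence has finite order). The key step is then to tie this condition to the hypothesis $a^2/b \in \F_q$. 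I would use the Vi\`ete relations $\lambda_1 + \lambda_2 = a$ and $\lambda_1 \lambda_2 = -b$ to compute
$$
\frac{\lambda_2}{\lambda_1} + \frac{\lambda_1}{\lambda_2} = \frac{\lambda_1^2 + \lambda_2^2}{\lambda_1\lambda_2} = \frac{a^2 + 2b}{-b} = -\frac{a^2}{b} - 2,
$$
so that $\lambda_2/\lambda_1$ is a root of $Y^2 + (a^2/b + 2)Y + 1 \in \F_q(x)[Y]$.

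If $a^2/b \in \F_q$, this quadratic has coefficients in $\F_q$, forcing $\lambda_2/\lambda_1 \in \F_{q^2} \subset \overline{\F_q}$, giving both the root-of-unity conclusion and the claim that $\lambda_2/\lambda_1 \in \F_{q^2}$. Conversely, if $\lambda_2/\lambda_1$ is a root of unity, then the sum $\lambda_2/\lambda_1 + \lambda_1/\lambda_2$ lies in $\overline{\F_q}$; but it also equals $-a^2/b - 2 \in \F_q(x)$. The potential obstacle is to justify that $\overline{\F_q} \cap \F_q(x) = \F_q$, which follows since $\F_q(x)$ is a purely transcendental extension of $\F_q$ (so $\F_q$ is algebraically closed in $\F_q(x)$). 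This forces $a^2/b \in \F_q$, completing the equivalence. Finally, the ``moreover'' statement follows because once $\lambda_2/\lambda_1$ has finite order $d$, the condition $(\lambda_2/\lambda_1)^n = 1$ is exactly $d \mid n$.
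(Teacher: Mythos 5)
Your proof is correct and follows essentially the same route as the paper: both reduce via Lemma~\ref{lem:Fn} to the condition $(\lambda_2/\lambda_1)^n=1$ and use the Vi\`ete relations to tie a symmetric function of $\lambda_2/\lambda_1$ (you use $\lambda_2/\lambda_1+\lambda_1/\lambda_2=-a^2/b-2$, the paper the equivalent $-(\gamma+1)^2/\gamma=a^2/b$) to the hypothesis $a^2/b\in\F_q$, invoking that $\F_q$ is algebraically closed in $\F_q(x)$. One harmless slip in part (a): the chain should read $\lambda_1^2=\lambda_1\lambda_2=-b$ rather than $\lambda_1^2=-\lambda_1\lambda_2=b$; the conclusion $\lambda_1\neq 0$ is unaffected.
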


\begin{proof}
Part (a) follows directly from Lemma~\ref{lem:Fn}, because $\Delta = 0$ and the characteristic of $\F_q$ is $p$. 

Part (b). Note that $\Delta \ne 0$. 
First, we suppose that $F_n = 0$ for some positive integer $n$. 
Then, by Lemma~\ref{lem:Fn}, we have $\lambda_1^n-\lambda_2^n = 0$, 
that is, $(\lambda_2 / \lambda_1)^n = 1$, which implies $\lambda_2 / \lambda_1 \in \overline{\F}_q$.  
Since the field extension degree $[\F_q(x, \lambda_1):\F_q(x)] \le 2$, we in fact have $\lambda_2 / \lambda_1 \in \F_{q^2}$.
Denote $\gamma = \lambda_2 / \lambda_1$. Then, $\gamma \in \F_{q^2}$ and $\lambda_2 = \gamma\lambda_1$. 
Since $\lambda_1, \lambda_2$ are the two roots of the equation $X^2 - aX -b = 0$, we have 
$$
\lambda_1 + \lambda_2 = a, \qquad \lambda_1 \lambda_2 = -b. 
$$
So, we obtain 
$$
(\gamma +1)\lambda_1 = a, \qquad \gamma\lambda_1^2 = -b, 
$$
which gives 
$$
\frac{a^2}{b} = - \frac{(\gamma+1)^2}{\gamma} \in \overline{\F}_q. 
$$
Hence, noticing $\dfrac{a^2}{b} \in \F_q(x)$, we must have $\dfrac{a^2}{b} \in \F_q$.  

Conversely, we assume $\dfrac{a^2}{b} \in \F_q$.  
Notice that 
$$
 \frac{\lambda_1}{\lambda_2} + \frac{\lambda_2}{\lambda_1} + 2 = \frac{(\lambda_1 + \lambda_2)^2}{\lambda_1\lambda_2} =
 \frac{a^2}{-b} \in \F_q.
$$ 
Then,  $\lambda_1/\lambda_2 + \lambda_2/\lambda_1 \in \F_q$. 
So, by valuation we must have $\lambda_2/\lambda_1 \in \overline{\F}_q$. 
Let $m$ be the order of $\lambda_2/\lambda_1$. Then, by Lemma~\ref{lem:Fn} we obtain $F_m = 0$. 

The rest of Part (b) is obvious. 
\end{proof}

Now, we use Lemmas~\ref{lem:Fn} and \ref{lem:zero} to determine under which condition $F_n = 0$ and $F_{n+1}=1$ for some positive integer $n$. 

\begin{lemma} \label{lem:zero-one}
We have that $F_n = 0$ and $F_{n+1}=1$ for some positive integer $n$ if and only if both $a$ and $b$ are in $\F_q$. 
Moreover, if both $a$ and $b$ are in $\F_q$ and let $\lambda_1, \lambda_2$ be the two roots of $f(X)$, then we have: 
\begin{itemize}
\item[(a)] if $\Delta = 0$, then for any integer $n \ge 1$, $F_n = 0$ and $F_{n+1} = 1$ if and only if $p \mid n$ and the order of $\lambda_1$ divides $n$; 

\item[(b)] if $\Delta \ne 0$, then for any integer $n \ge 1$, $F_n = 0$ and $F_{n+1} = 1$ if and only if 
the order of $\lambda_2/\lambda_1$ divides $n$ and the order of $\lambda_1$ divides $n$. 
\end{itemize}
\end{lemma}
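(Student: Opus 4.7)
My plan is to handle both directions simultaneously by splitting into the two cases $\Delta = 0$ and $\Delta \ne 0$ and computing $F_{n+1}$ from the closed-form expression in Lemma~\ref{lem:Fn}, under the assumption that $F_n = 0$ (for which Lemma~\ref{lem:zero} already tells us the precise criterion).

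First, for the ``only if'' direction, suppose $F_n = 0$ and $F_{n+1} = 1$ for some $n \ge 1$. If $\Delta = 0$, Lemma~\ref{lem:zero}(a) gives $p \mid n$, so $n+1 \not\equiv 0 \pmod p$, and the formula $F_{n+1} = (n+1)\lambda_1^n$ forces $\lambda_1^n = (n+1)^{-1} \in \F_q^{\times}$. If $\Delta \ne 0$, Lemma~\ref{lem:zero}(b) gives $a^2/b \in \F_q$ and the order of $\gamma := \lambda_2/\lambda_1$ divides $n$. Writing $\lambda_2 = \gamma\lambda_1$ and using $\gamma^n = 1$, a direct computation yields
\[
F_{n+1} = \frac{\lambda_1^{n+1} - \lambda_2^{n+1}}{\lambda_1 - \lambda_2} = \frac{\lambda_1^{n+1}(1-\gamma^{n+1})}{\lambda_1(1-\gamma)} = \frac{\lambda_1^{n+1}(1-\gamma)}{\lambda_1(1-\gamma)} = \lambda_1^n,
\]
so the hypothesis $F_{n+1} = 1$ again gives $\lambda_1^n = 1$. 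In both cases $\lambda_1$ satisfies a polynomial over $\F_q$, hence $\lambda_1 \in \overline{\F}_q$; since $\lambda_2$ is either equal to $\lambda_1$ or equal to $\gamma \lambda_1$ with $\gamma \in \F_{q^2}$, also $\lambda_2 \in \overline{\F}_q$. Then $a = \lambda_1 + \lambda_2$ and $b = -\lambda_1\lambda_2$ lie in $\overline{\F}_q \cap \F_q[x] = \F_q$.

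Conversely, for the ``if'' direction, assume $a, b \in \F_q$, so that $\lambda_1, \lambda_2 \in \overline{\F}_q^{\times}$. When $\Delta = 0$, take any $n$ that is a common multiple of $p$ and the order of $\lambda_1$; then $F_n = n\lambda_1^{n-1} = 0$ and $F_{n+1} = (n+1)\lambda_1^n = 1$. When $\Delta \ne 0$, take any $n$ that is a common multiple of the orders of $\lambda_1$ and of $\gamma = \lambda_2/\lambda_1$; then $\lambda_1^n = \lambda_2^n = 1$ gives $F_n = 0$, and the computation above gives $F_{n+1} = \lambda_1^n = 1$.

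The ``moreover'' statements drop out of exactly the same calculations: in case (a), $F_n = 0$ is equivalent to $p \mid n$ by Lemma~\ref{lem:zero}(a), and given this the relation $F_{n+1} = (n+1)\lambda_1^n = \lambda_1^n$ shows that $F_{n+1} = 1$ is equivalent to the order of $\lambda_1$ dividing $n$; in case (b), $F_n = 0$ is equivalent to $\ord(\gamma) \mid n$ by Lemma~\ref{lem:zero}(b), and the identity $F_{n+1} = \lambda_1^n$ under this hypothesis shows that $F_{n+1} = 1$ is equivalent to $\ord(\lambda_1) \mid n$. The only potentially subtle step is the passage from $\lambda_1^n = 1$ to $\lambda_1 \in \overline{\F}_q$, but this is immediate because $\lambda_1$ is then a root of the $\F_q$-polynomial $X^n - 1$, and then the final descent $a, b \in \overline{\F}_q \cap \F_q[x] = \F_q$ is formal.
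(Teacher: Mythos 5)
Your proof is correct and follows essentially the same route as the paper: use Lemma~\ref{lem:zero} to characterize when $F_n=0$, compute $F_{n+1}=\lambda_1^n$ from the closed form of Lemma~\ref{lem:Fn} (using $n+1\equiv 1\pmod p$ in the repeated-root case and $\gamma^n=1$ in the distinct-root case), and deduce $a,b\in\overline{\F}_q\cap\F_q(x)=\F_q$ from $\lambda_1^n=1$. The only cosmetic difference is that you split the sufficiency direction into the two discriminant cases where the paper handles it uniformly; the substance is identical.
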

\begin{proof}
Let $\lambda_1, \lambda_2$ be the two roots of $f(X)$. 

For sufficiency, we suppose that both $a$ and $b$ are in $\F_q$. 
Then, $f(X)$ is in fact a polynomial over $\F_q$, and so its roots $\lambda_1, \lambda_2$ are both in $\overline{\F_q}$. 
Hence, we can choose a positive integer $n$ such that $p \mid n$ and $\lambda_1^n = \lambda_2^n = 1$. 
By Lemma~\ref{lem:Fn}, it is easy to see that $F_n = 0$ and $F_{n+1} = 1$. 
This proves the sufficiency. 

For necessity, we suppose that $F_n = 0$ and $F_{n+1}=1$ for some positive integer $n$. 
If furthermore $\Delta = 0$, then by Lemma~\ref{lem:zero} we have $p \mid n$, and so by Lemma~\ref{lem:Fn} 
we have 
$$
F_{n+1} = (n+1)\lambda_1^n = \lambda_1^n,
$$
 which together with $F_{n+1}=1$ implies $\lambda_1^n = 1$, 
and thus $\lambda_1 \in \overline{\F_q}$, and then both $a$ and $b$ are in $\F_q$. 
Otherwise, if furthermore $\Delta \ne 0$, then by Lemma~\ref{lem:zero} we have $(\lambda_2/\lambda_1)^n = 1$ (that is, $\lambda_1^n = \lambda_2^n$), 
and so by Lemma~\ref{lem:Fn} we obtain 
$$
F_{n+1} = \frac{\lambda_1^{n+1}-\lambda_2^{n+1}}{\lambda_1-\lambda_2} 
= \frac{ \lambda_1^{n+1}- \lambda_1^{n} \lambda_2}{\lambda_1-\lambda_2} = \lambda_1^n,
$$
which together with $F_{n+1}=1$ implies $\lambda_1^n = 1$, 
and thus $\lambda_1 \in \overline{\F_q}$, and then both $a$ and $b$ are in $\F_q$. 
This completes the proof of the necessity. 

The second part of this lemma is in fact implied in the above arguments and Lemma~\ref{lem:zero}. 
\end{proof}

When the characteristic $p$ is odd, we get another formula for $F_n$ by using the discriminant $\Delta$. 

\begin{lemma} \label{lem:FnDelta}
    If the characteristic $p$ is odd, then for any integer $n \ge 1$ we have
    $$
    2^{n-1}F_n = \binom{n}{1} a^{n-1} + \binom{n}{3} a^{n-3}\Delta + \binom{n}{5} a^{n-5}\Delta^2 + \cdots; 
    $$
     if furthermore $n$ is odd, we have 
    $$
    2^{n-1}F_n = \binom{n}{1} a^{n-1} + \binom{n}{3} a^{n-3}\Delta+ \cdots + \binom{n}{n-2}a^2\Delta^{\frac{n-3}{2}}+\Delta^{\frac{n-1}{2}}.
    $$
\end{lemma}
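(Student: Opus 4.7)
The plan is to derive both identities directly from the Binet-type formula given in Lemma~\ref{lem:Fn}, using that $2$ is invertible in $\F_q$ (since $p$ is odd) and splitting on whether $\Delta$ vanishes.

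First I would handle the generic case $\Delta \ne 0$. As noted in the discussion following Lemma~\ref{lem:Fn}, the quadratic formula applies in odd characteristic, so the two roots of $f(X)$ are
$$
\lambda_1 = \frac{a+\sqrt{\Delta}}{2}, \qquad \lambda_2 = \frac{a-\sqrt{\Delta}}{2},
$$
in $\overline{\F_q(x)}$, with $\lambda_1 - \lambda_2 = \sqrt{\Delta}$. Substituting into Lemma~\ref{lem:Fn} gives
$$
2^n \sqrt{\Delta}\, F_n \;=\; (a+\sqrt{\Delta})^n - (a-\sqrt{\Delta})^n.
$$
Expanding each bracket by the binomial theorem, the terms with even powers of $\sqrt{\Delta}$ cancel and the odd ones double, yielding
$$
(a+\sqrt{\Delta})^n - (a-\sqrt{\Delta})^n \;=\; 2\sqrt{\Delta}\sum_{j\ge 0}\binom{n}{2j+1} a^{n-2j-1}\Delta^{j}.
$$
Cancelling $2\sqrt{\Delta}$ from both sides (legal since $p$ is odd and we work in $\overline{\F_q(x)}$) and dividing by $2$ gives exactly
$$
2^{n-1}F_n \;=\; \sum_{j\ge 0}\binom{n}{2j+1}a^{n-2j-1}\Delta^{j},
$$
which is the first asserted identity.

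Next I would deal with the degenerate case $\Delta = 0$. Then Lemma~\ref{lem:Fn} gives $F_n = n\lambda_1^{n-1}$ with $\lambda_1 = a/2$, so $2^{n-1}F_n = n\,a^{n-1} = \binom{n}{1}a^{n-1}$. All higher-order terms in the claimed sum carry a factor of $\Delta$ and therefore vanish, so the formula holds in this case as well. Note that the identity is really an identity of polynomials in $a$ and $b$ (equivalently, $a$ and $\Delta$), so one could alternatively bypass this case by a polynomial-identity argument, but the direct verification is quicker.

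Finally, for odd $n$ the sum in the first formula runs over $j = 0,1,\ldots,(n-1)/2$; identifying the last two summands as $\binom{n}{n-2}a^{2}\Delta^{(n-3)/2}$ and $\binom{n}{n}\Delta^{(n-1)/2}=\Delta^{(n-1)/2}$ immediately yields the second displayed formula. There is no real obstacle here; the only point to watch is that the intermediate identity lives in the extension $\F_q(x)(\sqrt{\Delta})$ when $\Delta$ is not a square, but the final expression has no $\sqrt{\Delta}$ and lies in $\F_q[x]$, which is exactly what we want.
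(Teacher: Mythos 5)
Your proposal is correct and follows essentially the same route as the paper's own proof: split on whether $\Delta$ vanishes, use the quadratic-formula roots together with Lemma~\ref{lem:Fn} and the binomial expansion of $(a\pm\sqrt{\Delta})^n$ when $\Delta\ne 0$, and verify $2^{n-1}F_n = na^{n-1}$ directly from $F_n = n\lambda_1^{n-1}$ with $2\lambda_1 = a$ when $\Delta = 0$. The reading-off of the odd-$n$ case is likewise the same.
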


\begin{proof}
    Suppose $\Delta \ne 0$. Then, the characteristic polynomial
    $f(X)$ has two distinct roots $\lambda_1,\lambda_2$, where (since $p$ is odd)
     $$\lambda_1=\dfrac{a+\sqrt{\Delta}}{2},\quad \lambda_2=\dfrac{a-\sqrt{\Delta}}{2}. $$
    Using Lemma~\ref{lem:Fn},  we obtain 
    \begin{align*}
        2^{n-1}F_n&=2^{n-1} \dfrac{\lambda_1^n-\lambda_2^n}{\lambda_1-\lambda_2}=\dfrac{(a+\sqrt{\Delta})^n-(a-\sqrt{\Delta})^n}{2\sqrt{\Delta}} \\
        &=\dfrac{2\binom{n}{1} a^{n-1}\sqrt{\Delta}+2\binom{n}{3} a^{n-3}(\sqrt{\Delta})^3 +2 \binom{n}{5} a^{n-5}(\sqrt{\Delta})^5+ \cdots}{2\sqrt{\Delta}} \\
        &= \binom{n}{1} a^{n-1}+ \binom{n}{3} a^{n-3}\Delta + \binom{n}{5} a^{n-5}\Delta^2+ \cdots.
    \end{align*}
    
    Now, we suppose $\Delta = 0$. 
    Then, $f(X)$ has a related root, say $\lambda_1$. 
    By Lemma~\ref{lem:Fn}, we have $F_n=n\lambda_1^{n-1}$. 
    Thus, noticing $2\lambda_1 = a$, we have 
    $$2^{n-1}F_n=n(2\lambda_1)^{n-1}=na^{n-1}.$$
\end{proof}

Finally, we present two lemmas about the relations among the terms in the sequence $\{F_n\}$. 

\begin{lemma} \label{lem:Fnn}
    For any integer $n \ge 1$, we have 
$$
F_n^2-F_{n+1}F_{n-1}=(-b)^{n-1},
$$
and 
$$(bF_{n-1})^2+bF_n(aF_{n-1}-F_n)=(-b)^n.$$
\end{lemma}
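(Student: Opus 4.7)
The plan is to deduce both identities simultaneously from the matrix identity
$$U^n=\begin{bmatrix} bF_{n-1} & F_n \\ bF_n & F_{n+1}\end{bmatrix}$$
already recorded in Section~\ref{sec:pre}, together with the multiplicativity of the determinant. Since $\det U = -b$, we have $\det(U^n) = (\det U)^n = (-b)^n$, so every expression one can write for $\det(U^n)$ by expanding the $2\times 2$ matrix above gives an identity of the required shape.

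For the first identity, I would simply expand the determinant directly:
$$\det(U^n) = bF_{n-1}\cdot F_{n+1} - F_n\cdot bF_n = -b\bigl(F_n^2 - F_{n+1}F_{n-1}\bigr).$$
Comparing with $(-b)^n$ and cancelling the nonzero factor $-b$ (using the standing hypothesis $b\ne 0$) yields $F_n^2 - F_{n+1}F_{n-1} = (-b)^{n-1}$, as desired.

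For the second identity, before taking the determinant I would first replace the $(2,2)$-entry $F_{n+1}$ by $aF_n + bF_{n-1}$ using the defining recurrence. This gives
$$\det(U^n) = bF_{n-1}(aF_n + bF_{n-1}) - bF_n^2 = (bF_{n-1})^2 + bF_n(aF_{n-1}-F_n),$$
and equating with $(-b)^n$ produces the claimed identity.

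There is no real obstacle here: both identities are really the single statement $\det(U^n) = (-b)^n$ expressed in two algebraically equivalent ways, and the only content is the small regrouping above. (Alternatively, one could prove each identity by induction on $n$ using $F_{n+1}=aF_n + bF_{n-1}$, but the determinant route is cleaner and avoids any case distinction at $n=1$, where both sides can be checked directly from $F_0=0$, $F_1=1$.)
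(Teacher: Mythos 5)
Your proof is correct, but it takes a genuinely different route from the paper's. The paper proves the first identity by substituting the closed-form expression for $F_n$ from Lemma~\ref{lem:Fn} and computing directly, which forces a case split between $\Delta \ne 0$ (where $F_n = (\lambda_1^n - \lambda_2^n)/(\lambda_1 - \lambda_2)$) and $\Delta = 0$ (where $F_n = n\lambda_1^{n-1}$); it then obtains the second identity, exactly as you do, by substituting the recurrence $F_{n+1} = aF_n + bF_{n-1}$. Your determinant argument, $\det(U^n) = (\det U)^n = (-b)^n$ expanded against the explicit entries of $U^n$, avoids the case distinction entirely and only uses facts already recorded in Section~\ref{sec:pre} (the formula for $U^n$ and $\det U = -b$), plus cancellation of the nonzero factor $-b$ in the integral domain $\F_q[x]$. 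It is arguably the more natural proof given that the paper has already set up the matrix machinery; the paper's root-based computation has the mild advantage of not relying on the matrix representation, but buys nothing else here. Both are complete and correct.
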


\begin{proof}
    If $\Delta \ne 0$, using Lemma~\ref{lem:Fn} 
    we have
    \begin{align*}
        F_n^2-F_{n+1}F_{n-1}&=\dfrac{(\lambda_1^n-\lambda_2^n)^2}{(\lambda_1-\lambda_2)^2}-\dfrac{(\lambda_1^{n+1}-\lambda_2^{n+1})(\lambda_1^{n-1}-\lambda_2^{n-1})}{(\lambda_1-\lambda_2)^2} \\
        &=\dfrac{(\lambda_1\lambda_2)^{n-1}(\lambda_1-\lambda_2)^2}{(\lambda_1-\lambda_2)^2}=(\lambda_1\lambda_2)^{n-1}=(-b)^{n-1}.
    \end{align*}

    If $\Delta = 0$,  using Lemma~\ref{lem:Fn} we obtain 
       $$F_n^2-F_{n+1}F_{n-1}=n^2\lambda_1^{2(n-1)}-(n+1)\lambda_1^n(n-1)\lambda_1^{n-2}=\lambda_1^{2(n-1)}=(-b)^{n-1}.$$

 Substituting $F_{n+1}=aF_n+bF_{n-1}$ into the first identity, we obtain the second identity.  
\end{proof}

\begin{lemma} \label{lem:Fnp}
    For any positive integer $n$, we have $F_{np}=(F_n)^pF_p$. 
\end{lemma}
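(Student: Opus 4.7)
The plan is to leverage Lemma~\ref{lem:Fn}, which expresses $F_n$ as a simple rational function of the roots $\lambda_1,\lambda_2$ of $f(X)$, and then exploit that we are in characteristic $p$ via the Frobenius identity $(u-v)^p = u^p - v^p$. The identity splits naturally into the two cases $\Delta\ne0$ and $\Delta=0$.

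In the case $\Delta\ne0$, I would start from $F_{np} = (\lambda_1^{np}-\lambda_2^{np})/(\lambda_1-\lambda_2)$ and try to massage it into the shape $(F_n)^p F_p$. The trick is to apply Frobenius twice: once to the numerator, writing $\lambda_1^{np}-\lambda_2^{np} = (\lambda_1^n-\lambda_2^n)^p$, and once to the denominator, writing $(\lambda_1-\lambda_2)^p = \lambda_1^p-\lambda_2^p$. Then
\[
(F_n)^p = \frac{(\lambda_1^n-\lambda_2^n)^p}{(\lambda_1-\lambda_2)^p} = \frac{\lambda_1^{np}-\lambda_2^{np}}{\lambda_1^p-\lambda_2^p},
\]
and multiplying by $F_p = (\lambda_1^p-\lambda_2^p)/(\lambda_1-\lambda_2)$ telescopes to $F_{np}$.

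In the case $\Delta=0$, Lemma~\ref{lem:Fn} gives $F_n = n\lambda_1^{n-1}$, so $F_{np} = np\,\lambda_1^{np-1} = 0$ because $p$ is the characteristic. On the right-hand side, $F_p = p\lambda_1^{p-1} = 0$ as well, so $(F_n)^p F_p = 0$, and the identity holds trivially.

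I do not anticipate any real obstacle here: the whole argument is a direct computation using Lemma~\ref{lem:Fn} together with the Frobenius endomorphism on the algebraic closure $\overline{\F_q(x)}$. The only minor thing to double-check is that the manipulations are legitimate when $\lambda_1=\lambda_2$ (handled separately above) and that the factors $\lambda_1-\lambda_2$ in the denominators cancel cleanly, which they do. Thus the proof reduces to two short lines in each case.
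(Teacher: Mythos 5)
Your proposal is correct and follows essentially the same route as the paper: both arguments apply Lemma~\ref{lem:Fn} and the Frobenius identity $(u-v)^p=u^p-v^p$ to the numerator and denominator in the case $\Delta\ne 0$, and both observe that $F_p=F_{np}=0$ when $\Delta=0$. No gaps.
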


\begin{proof}
If $\Delta \ne 0$, using Lemma~\ref{lem:Fn} we have
$$
F_{np}=\frac{\lambda_1^{np}-\lambda_2^{np}}{\lambda_1-\lambda_2}
=\frac{\lambda_1^{np}-\lambda_2^{np}}{\lambda_1^p - \lambda_2^p}\cdot\frac{\lambda_1^p-\lambda_2^p}{\lambda_1-\lambda_2}.
$$
Noticing $\lambda_1^{np}-\lambda_2^{np} = (\lambda_1^n-\lambda_2^n)^p$ and $\lambda_1^{p}-\lambda_2^{p} = (\lambda_1-\lambda_2)^p$, we get 
    $$
    F_{np}=\left(\frac{\lambda_1^{n}-\lambda_2^{n}}{\lambda_1 - \lambda_2}\right)^p \cdot \frac{\lambda_1^p-\lambda_2^p}{\lambda_1-\lambda_2}
          = (F_n)^pF_p.
    $$

    If $\Delta = 0$,  using Lemma~\ref{lem:Fn} we obtain  $F_p=p\lambda^{p-1}=0 ,F_{np}=np\lambda^{np-1} = 0$.
   So, we also have $F_{np}=(F_n)^p F_p$.
\end{proof}

\section{Computing $\alpha(M)$ and $\pi(M)$} 
\label{sec:M}
In this paper, we try to construct an algorithm for computing  $\alpha(M)$ and $\pi(M)$ when $a,b$ and $M$ are given. 
Clearly, by definition, for any non-zero element $c \in \F_q^*$ we have $\alpha(cM)=\alpha(M)$ and $\pi(cM)=\pi(M)$.  
As we all know, all polynomials in $\F_q[x]$ can be decomposed into the product of some irreducible polynomials. 
The strategy for computing $\alpha(M)$ and $\pi(M)$ is to reduce the problem to the special case when $M$ is a power of an irreducible polynomial. 

As usual, for any two integers $m_1, m_2$, let $[m_1, m_2]$ be the least common multiple of $m_1$ and $m_2$. 
Besides, for any two polynomials $M_1,M_2 \in \F_q[x]$, denote by $[M_1,M_2]$ the common multiple of $M_1$ and $M_2$ 
which is monic and of smallest degree.

\begin{theorem} \label{thm:lcm}
    For any non-constant polynomials $M_1,M_2 \in \F_q[x]$ which are both relatively prime to $b$, we have:  
\begin{itemize}    
   \item[(a)] $\alpha([M_1,M_2])=[\alpha(M_1),\alpha(M_2)]$;

    \item[(b)] $\pi([M_1,M_2])=[\pi(M_1),\pi(M_2)]$.
\end{itemize}
\end{theorem}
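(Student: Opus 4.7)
The plan is to reduce both statements to the divisibility characterizations \eqref{eq:Fn0-M} and \eqref{eq:Fn01-M} already established in Section~\ref{sec:pre}, combined with the basic fact that a polynomial $N$ is divisible by $[M_1,M_2]$ if and only if $N$ is divisible by both $M_1$ and $M_2$. Before invoking these, I would first check that the ranks $\alpha$ and periods $\pi$ of $[M_1,M_2]$ are actually defined: since $\gcd(b,M_1)=\gcd(b,M_2)=1$, any common prime divisor of $b$ and $[M_1,M_2]$ would have to divide one of the $M_i$, so $\gcd(b,[M_1,M_2])=1$ as required.

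For part (a), I would show the two-sided divisibility
$$\alpha([M_1,M_2]) \mid n \iff [\alpha(M_1),\alpha(M_2)] \mid n$$
for every positive integer $n$, which forces equality of the two quantities. Starting from the left, \eqref{eq:Fn0-M} applied to $[M_1,M_2]$ converts the condition to $F_n\equiv 0 \pmod{[M_1,M_2]}$. By the factorization property of the least common multiple in $\F_q[x]$, this is equivalent to $F_n\equiv 0 \pmod{M_i}$ for $i=1,2$, which by \eqref{eq:Fn0-M} applied to each $M_i$ is equivalent to $\alpha(M_1)\mid n$ and $\alpha(M_2)\mid n$, i.e. $[\alpha(M_1),\alpha(M_2)] \mid n$. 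Taking $n$ to be either side in turn then yields (a).

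For part (b), the argument is identical in structure but uses \eqref{eq:Fn01-M} in place of \eqref{eq:Fn0-M}: the condition $\pi([M_1,M_2]) \mid n$ is equivalent to simultaneous congruences $F_n\equiv 0$ and $F_{n+1}\equiv 1$ modulo $[M_1,M_2]$, which again by the lcm's defining property splits as these congruences holding modulo each $M_i$, and then by \eqref{eq:Fn01-M} amounts to $\pi(M_1) \mid n$ and $\pi(M_2) \mid n$, i.e. $[\pi(M_1),\pi(M_2)] \mid n$.

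There is essentially no obstacle here; the result is a formal consequence of the characterizations of $\alpha$ and $\pi$ via divisibility, together with the elementary property of the least common multiple in $\F_q[x]$. The only minor point to state carefully is that in this polynomial setting $[M_1,M_2]$ was defined to be monic of smallest degree, so divisibility by $[M_1,M_2]$ means exactly the simultaneous divisibility by $M_1$ and $M_2$ up to units; but since we are working modulo $[M_1,M_2]$ the unit factor is immaterial.
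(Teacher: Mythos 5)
Your proposal is correct and follows essentially the same route as the paper: both arguments reduce to the divisibility characterizations \eqref{eq:Fn0-M} and \eqref{eq:Fn01-M} together with the fact that divisibility by $[M_1,M_2]$ is equivalent to simultaneous divisibility by $M_1$ and $M_2$. Your phrasing as a single divisibility equivalence for all $n$ is just a streamlined packaging of the paper's two-sided divisibility argument, and your preliminary check that $\gcd(b,[M_1,M_2])=1$ is a harmless (and welcome) addition.
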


\begin{proof}
    Let $M=[M_1,M_2], \alpha=\alpha(M), \alpha_1=\alpha(M_1),\alpha_2=\alpha(M_2).$

    On the one hand, since $F_{\alpha} \equiv 0 \pmod{M}$, $M_1\mid M$ and $M_2 \mid M$,
    we have $F_{\alpha} \equiv 0\pmod{M_1}$ and $F_{\alpha} \equiv 0\pmod{M_2}$. 
    So, by \eqref{eq:Fn0-M}, $\alpha_1 \mid \alpha$ and $\alpha_2 \mid \alpha$, and then we obtain $[\alpha_1, \alpha_2] \mid \alpha$. 
    
    On the other hand, since $F_{\alpha_1} \equiv 0\pmod{M_1}$ and $F_{\alpha_2} \equiv 0\pmod{M_2}$,
    we have $F_{[\alpha_1,\alpha_2]} \equiv 0\pmod{M_1}$ and $F_{[\alpha_1,\alpha_2]} \equiv 0\pmod{M_2}$. 
    Thus, we get $F_{[\alpha_1,\alpha_2]} \equiv 0 \pmod{M}$, which implies  $\alpha \mid [\alpha_1,\alpha_2] $.

    Hence, we obtain  $\alpha = [\alpha_1,\alpha_2] $, as claimed in Part (a). 

Similarly, one can prove Part (b).    
\end{proof}

\begin{corollary}   \label{cor:M1M2}
If $M_1 \mid M_2$, then $\alpha(M_1) \mid \alpha(M_2)$ and $\pi(M_1) \mid \pi(M_2)$. 
\end{corollary}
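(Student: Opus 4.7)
The plan is to derive this as an immediate consequence of Theorem~\ref{thm:lcm}, with a small normalization caveat. Since $\alpha$ and $\pi$ are unchanged under multiplication by a nonzero constant of $\F_q^*$, I may assume that $M_1$ and $M_2$ are monic. If $M_1 \mid M_2$, then the monic common multiple of smallest degree $[M_1,M_2]$ equals $M_2$. Theorem~\ref{thm:lcm}(a) then gives $\alpha(M_2) = \alpha([M_1,M_2]) = [\alpha(M_1), \alpha(M_2)]$, whence $\alpha(M_1) \mid \alpha(M_2)$. Part (b) of the same theorem yields $\pi(M_1) \mid \pi(M_2)$ in exactly the same fashion.

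Alternatively, and perhaps more transparently, I would argue directly using the characterizations \eqref{eq:Fn01-M} and \eqref{eq:Fn0-M}. Suppose $M_1 \mid M_2$. By definition of $\alpha(M_2)$, we have $F_{\alpha(M_2)} \equiv 0 \pmod{M_2}$, and since $M_1 \mid M_2$ this reduction descends to give $F_{\alpha(M_2)} \equiv 0 \pmod{M_1}$. Applying \eqref{eq:Fn0-M} to the modulus $M_1$, we conclude $\alpha(M_1) \mid \alpha(M_2)$. For the period, by definition of $\pi(M_2)$ we have $F_{\pi(M_2)} \equiv 0 \pmod{M_2}$ and $F_{\pi(M_2)+1} \equiv 1 \pmod{M_2}$; reducing modulo $M_1$ and invoking \eqref{eq:Fn01-M} gives $\pi(M_1) \mid \pi(M_2)$.

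There is essentially no obstacle here: both routes are short, and the only point to be mindful of is that the definition of $[M_1,M_2]$ specified in the paper is the monic common multiple of smallest degree, so one must normalize to monic representatives before applying Theorem~\ref{thm:lcm}. I would likely present the direct argument based on \eqref{eq:Fn0-M} and \eqref{eq:Fn01-M}, since it bypasses any normalization discussion and keeps the corollary self-contained.
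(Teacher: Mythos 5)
Your proposal is correct and matches the paper's (implicit) proof: the corollary is stated as an immediate consequence of Theorem~\ref{thm:lcm} via $[M_1,M_2]=M_2$, which is exactly your first route, and your alternative direct argument via \eqref{eq:Fn0-M} and \eqref{eq:Fn01-M} is just the first half of the proof of that theorem anyway. The monicity normalization you flag is a reasonable point of care but harmless either way.
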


Assume that $M$ has the prime decomposition $M=cP_1^{e_1}P_2^{e_2} \cdots P_k^{e_k}$ for some $c \in \F_q^*$ and some distinct monic irreducible polynomials $P_1, \ldots, P_k \in \F_q[x]$.
Then, by Theorem~\ref{thm:lcm}, we have:
\begin{align*}
& \alpha(M)=[\alpha(P_1^{e_1}),\alpha(P_2^{e_2}),...,\alpha(P_k^{e_k})], \\
& \pi(M)=[\pi(P_1^{e_1}),\pi(P_2^{e_2}),...,\pi(P_k^{e_k})].
\end{align*}
Hence, for computing $\alpha(M)$ and $\pi(M)$, we only need to compute $\alpha(P^e)$ and $\pi(P^e)$ for a power $P^e$ of an irreducible polynomial $P$, 
which will be treated in the next section.

\section{Computing $\alpha(P^e)$ and $\pi(P^e)$}
\label{sec:Pe}

\subsection{Outline}
In this section, we want to compute $\alpha(P^e)$ and $\pi(P^e)$, 
where $P$ is an irreducible polynomial in $\F_q[x]$ and $e$ is a positive integer. 
Roughly speaking, the strategy for computing $\alpha(P^e)$ and $\pi(P^e)$ is to reduce the problem to the special case when $e=1$. 

We remark that comparing this section with the related section in \cite{Ren} one can see that the polynomial case is quite different from the integer case 
and is much more complicated (see, for instance, Theorem~\ref{thm:ei'}). This can be imagined from the following lemma (compared to Proposition 1 of \cite{Ren}), which will be used again and again later on. 

Recall that the prime $p$ is the characteristic of $\F_q$. 

\begin{lemma} \label{lem:Pj} 
Let $P$ be an irreducible polynomial in $\F_q[x]$, and let $e$ be a positive integer. 
Then, for any integer $j$ with $e+1 \le j \le ep$, we have: 
\begin{itemize}        
   \item[(a)] $\alpha(P^j)=\alpha(P^e)$ or $p\alpha(P^e)$;
    
  \item[(b)] $\pi(P^j)=\pi(P^e)$ or $p\pi(P^e)$. 
\end{itemize}
\end{lemma}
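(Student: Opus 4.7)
The plan is to combine Corollary~\ref{cor:M1M2} with a Frobenius-style lifting of the matrix identities from Section~\ref{sec:pre}. Set $\alpha = \alpha(P^e)$ and $\pi = \pi(P^e)$. Since $P^e \mid P^j$ whenever $j \geq e+1$, Corollary~\ref{cor:M1M2} already gives $\alpha \mid \alpha(P^j)$ and $\pi \mid \pi(P^j)$. Because $p$ is prime, it will suffice to produce the reverse divisibilities $\alpha(P^j) \mid p\alpha$ and $\pi(P^j) \mid p\pi$; the only possibilities left for $\alpha(P^j)$ are then $\alpha$ and $p\alpha$, and similarly for $\pi(P^j)$.

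To obtain those reverse divisibilities, I would first upgrade the congruences coming from \eqref{eq:Uns} and \eqref{eq:UnI} to genuine equalities in the matrix ring $M_2(\F_q[x])$, writing
$$U^{\alpha} = sI + P^e V, \qquad U^{\pi} = I + P^e W,$$
where $s \in \F_q[x]$ is coprime to $P$ and $V, W$ are $2 \times 2$ matrices over $\F_q[x]$. The \emph{key observation} is that the leading summands $sI$ and $I$ are scalar matrices, hence commute with $V$ and $W$, so the naive binomial expansion is valid upon raising to the $p$-th power. Since we are working in characteristic $p$, the mixed binomial coefficients $\binom{p}{k}$ for $1 \leq k \leq p-1$ all vanish, and the expansions collapse to
$$U^{p\alpha} = s^p I + P^{ep} V^p, \qquad U^{p\pi} = I + P^{ep} W^p.$$

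Now take any $j$ with $e+1 \leq j \leq ep$. Reducing the two displayed identities modulo $P^j$ and using $j \leq ep$ gives $U^{p\alpha} \equiv s^p I \pmod{P^j}$ and $U^{p\pi} \equiv I \pmod{P^j}$, so \eqref{eq:Uns} and \eqref{eq:UnI} yield $\alpha(P^j) \mid p\alpha$ and $\pi(P^j) \mid p\pi$, which combined with the forward divisibilities completes the argument. The only point that requires care is the commutativity argument that licences the binomial expansion — this is precisely why one keeps the leading summand as a scalar representative $sI$ (or $I$) rather than working with $U^\alpha$ in isolation — and once that is in place the remainder is routine. I do not anticipate any real obstacle beyond that single check.
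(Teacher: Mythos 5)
Your proposal is correct and follows essentially the same route as the paper: write $U^{\alpha(P^e)}=sI+P^eB$ (resp.\ $U^{\pi(P^e)}=I+P^eB$), raise to the $p$-th power using the freshman's dream in characteristic $p$ to get $s^pI+P^{ep}B^p$, reduce modulo $P^j$ for $j\le ep$, and combine the resulting divisibility $\alpha(P^j)\mid p\alpha(P^e)$ with the reverse divisibility from Corollary~\ref{cor:M1M2}. Your explicit remark that the binomial expansion is licensed because the leading summand is scalar is a point the paper leaves implicit, but the argument is identical.
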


\begin{proof}
   Part (a). By definition, we have $U^{\alpha(P^e)} \equiv sI \pmod{P^e}$ for some polynomial $s$. 
    Then, $U^{\alpha(P^e)}=sI+P^e B$ for some matrix $B$, 
    and so we have 
$$U^{p{\alpha(P^e)}}=(sI+P^e B)^{p} = s^pI + P^{ep}B^p.$$
    Thus, we get $U^{p{\alpha(P^e)}} \equiv s^p I \pmod{P^j}$ for any integer $j$ with $e+1 \le j \le ep$. 
    This, together with \eqref{eq:Uns}, gives $\alpha(P^j) \mid p\alpha(P^e)$. 
    Besides, by Corollary~\ref{cor:M1M2}, we have $\alpha(P^e) \mid \alpha(P^j)$. 
Hence, we obtain 
    $$\alpha(P^j)=\alpha(P^e) \text{ or } p \alpha(P^e).$$
This proves Part (a). 

   Part (b).  By applying similar arguments as the above and using \eqref{eq:UnI} and Corollary~\ref{cor:M1M2}, one can prove Part (b). 
\end{proof}

To make the presentations clear, in the following we compute $\alpha(P^e)$ and $\pi(P^e)$ separately.

\subsection{Computing $\alpha(P^e)$}  \label{sec:alpha}
In this section, we fix an irreducible polynomial $P \in \F_q[x]$ which is relatively prime to $b$.  
By Corollary~\ref{cor:M1M2}, the value $\alpha(P^e)$ increases when $e$ increases, 
but it might be not strictly increasing. 

We first handle two simple cases. 

\begin{theorem} \label{thm:alpha0}
Let $\lambda_1, \lambda_2$ be the two roots of the characteristic polynomial $f(X)$  in $\overline{\F_q(x)}$. 
Then, we have: 
\begin{itemize}
\item[(a)] if $\Delta = 0$, then for any integer $e \ge 1$, $\alpha(P^e)=p$;  

\item[(b)] if $\Delta \ne 0$ and $\dfrac{a^2}{b} \in \F_q$, let $m$ be the order of $\lambda_2/\lambda_1$, 
then for any integer $e \ge 1$, $\alpha(P^e)$ divides $m$, and moreover, $\alpha(P^e) = m$ for any sufficiently large $e$. 
\end{itemize}
\end{theorem}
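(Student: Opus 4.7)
The plan is to combine Lemma~\ref{lem:zero}, which characterizes the positive integers $n$ with $F_n = 0$ as a polynomial in $\F_q[x]$, with the equivalence \eqref{eq:Fn0-M} that $\alpha(P^e) \mid n$ if and only if $F_n \equiv 0 \pmod{P^e}$. The guiding observation is that whenever $F_n = 0$ as a polynomial, it is automatically $\equiv 0 \pmod{P^e}$ for \emph{every} $e$, and so controls $\alpha(P^e)$ from above uniformly in $e$.

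For Part~(a), I would apply Lemma~\ref{lem:zero}(a) to conclude $F_p = 0$ as a polynomial, deduce $\alpha(P^e) \mid p$ from \eqref{eq:Fn0-M}, and then rule out the possibility $\alpha(P^e) = 1$, which would force $F_1 = 1 \equiv 0 \pmod{P^e}$ -- impossible since $P$ is non-constant. Primality of $p$ then forces $\alpha(P^e) = p$, uniformly in $e$.

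For Part~(b), Lemma~\ref{lem:zero}(b) gives $F_m = 0$ as a polynomial, and the same reasoning yields $\alpha(P^e) \mid m$ for every $e$, which is the first claim. The stabilization claim requires slightly more work: for each proper positive divisor $d$ of $m$ (so $m \nmid d$), Lemma~\ref{lem:zero}(b) forces $F_d \ne 0$, so $F_d$ has a well-defined and finite $P$-adic valuation $v_P(F_d)$. Taking $e$ strictly greater than the maximum of these valuations over the finitely many proper positive divisors of $m$, no such $d$ can satisfy $P^e \mid F_d$; combined with the divisibility $\alpha(P^e) \mid m$ already established, this leaves only $\alpha(P^e) = m$.

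There is no substantial obstacle here, but the subtlety worth emphasizing is the distinction between ``$F_n = 0$ as a polynomial'' and ``$F_n \equiv 0 \pmod{P^e}$''. In Part~(a) these two conditions coincide for every $n$, producing an answer uniform in $e$; in Part~(b) they coincide at $n = m$ but can differ at proper divisors of $m$, since a fixed nonzero polynomial $F_d$ may still be highly divisible by $P$ when $e$ is small. This is precisely why the hypothesis ``$e$ sufficiently large'' is unavoidable in the second assertion of Part~(b).
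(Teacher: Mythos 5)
Your proposal is correct and follows essentially the same route as the paper: both parts rest on Lemma~\ref{lem:zero} producing a genuine polynomial zero ($F_p=0$, resp.\ $F_m=0$), which gives the uniform divisibility via \eqref{eq:Fn0-M}, and the stabilization in Part~(b) is obtained by taking $e$ large enough that no smaller candidate $F_d\ne 0$ can be divisible by $P^e$. The only cosmetic difference is that you bound $e$ by the $P$-adic valuations $v_P(F_d)$ over proper divisors of $m$, while the paper uses the cruder bound $\deg(P^e)>\max_{1\le n<m}\deg(F_n)$; both serve the same purpose.
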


\begin{proof}
Part (a). Since $\Delta = 0$, by Lemma~\ref{lem:zero} we have $F_p = 0$.
So, for any integer $e \ge 1$ we have $\alpha(P^e)=p$. 

Part (b). 
Since $\Delta \ne 0$ and $\dfrac{a^2}{b} \in \F_q$, by Lemma~\ref{lem:zero} we have $F_m = 0$. 
Then, using \eqref{eq:Fn0-M} we obtain that for any integer $e \ge 1$, $\alpha(P^e)$ divides $m$. 
Moreover, when 
$$
\deg(P^e) > \max\{\deg(F_1), \deg(F_2), \ldots, \deg(F_{m-1})\},
$$
 clearly we have $\alpha(P^e) = m$. 
\end{proof}

We remark that in Part (b) of Theorem~\ref{thm:alpha0}, since $\lambda_2/\lambda_1 \in \F_{q^2}$ (due to Lemma~\ref{lem:zero}), 
we have that $m$ divides $q^2-1$.  

Therefore, it remains to consider the case when $\Delta \ne 0$ and $\dfrac{a^2}{b} \not\in \F_q$. 
In this case, to compute $\alpha(P^e)$ for every integer $e \ge 1$, our approach is to determine each integer $e$ with $\alpha(P^e) \ne \alpha(P^{e+1})$. 

Let $e_1$ be the first positive integer $e$ such that $\alpha(P^e) \neq \alpha(P^{e+1})$. 
More general, for any integer $i \ge 1$,  let $e_i$ be the $i$-th positive integer $e$ such that $\alpha(P^e) \neq \alpha(P^{e+1})$ 
(that is, $\alpha(P^{e+1})=p\alpha(P^{e})$ by Lemma~\ref{lem:Pj}). 

The following lemma ensures the existence of the above integers $e_i$. 

\begin{lemma}  \label{lem:ei}
Assume $\Delta \ne 0$ and $\dfrac{a^2}{b} \not\in \F_q$. 
Then, for any integer $i \ge 1$,  the integer $e_i$ always exists. 
\end{lemma}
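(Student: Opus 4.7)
The plan is to show that under the hypotheses $\Delta \ne 0$ and $a^2/b \notin \F_q$, the sequence $\{\alpha(P^e)\}_{e \ge 1}$ is unbounded. By Corollary~\ref{cor:M1M2} this sequence is non-decreasing, and by Lemma~\ref{lem:Pj}(a) each step either keeps the value fixed or multiplies it by $p$. So unboundedness is equivalent to the existence of infinitely many jump indices $e$ with $\alpha(P^{e+1}) = p\alpha(P^e)$, which is precisely the existence of $e_i$ for every $i \ge 1$.

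I would argue by contradiction. Suppose that only finitely many jump indices exist. Then there is some $e_0$ and some positive integer $N$ such that $\alpha(P^e) = N$ for every $e \ge e_0$. By \eqref{eq:Fn0-M}, this means $F_N \equiv 0 \pmod{P^e}$ for all such $e$, i.e.\ $P^e \mid F_N$ in $\F_q[x]$ for arbitrarily large $e$. Since $F_N$ is a fixed element of $\F_q[x]$, the divisibility $P^e \mid F_N$ with $e \deg(P) \to \infty$ forces $F_N = 0$ as a polynomial.

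However, Lemma~\ref{lem:zero}(b) asserts that, when $\Delta \ne 0$, the equation $F_n = 0$ has a positive integer solution if and only if $a^2/b \in \F_q$. This directly contradicts our hypothesis, so the assumption that only finitely many $e_i$ exist cannot hold, and the lemma follows.

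The argument is short; the only real point to be careful about is the transition from ``$P^e \mid F_N$ for arbitrarily large $e$'' to ``$F_N = 0$'', which is immediate from degree considerations but should be stated explicitly. No obstacle beyond invoking the previously established Lemma~\ref{lem:zero}(b), which is where the hypothesis $a^2/b \notin \F_q$ is used.
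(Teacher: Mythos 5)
Your proposal is correct and is essentially the paper's own argument: both hinge on Lemma~\ref{lem:zero}(b) giving $F_n\ne 0$ for all $n\ge 1$ under the stated hypotheses, combined with the observation that a fixed nonzero polynomial cannot be divisible by arbitrarily high powers of $P$. The paper phrases this directly (producing, for each $k$, a positive $n_k$ with $F_{n_k}\ne 0$ and $P^k\mid F_{n_k}$) while you argue by contradiction via eventual constancy of $\alpha(P^e)$, but the content is the same.
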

\begin{proof}
Since $\Delta \ne 0$ and $\dfrac{a^2}{b} \not\in \F_q$, by Lemma~\ref{lem:zero} we have that $F_n \ne 0$ for any integer $n \ge 1$. 
We also notice that $F_0 = 0$, and the sequence $\{F_n\}$ is a periodic sequence modulo $P^k$ for any integer $k \ge 1$ (due to $\gcd(b, P)=1$). 
Then, for any integer $k \ge 1$, there is a positive integer $n_k$ such that $F_{n_k} \ne 0$ and $F_{n_k} \equiv 0 \pmod{P^k}$. 
This implies that $e_i$ always exists for any integer $i \ge 1$. 
\end{proof}

Clearly, for any integer $i \ge 1$, by the choice of $e_i$, we know that 
\begin{equation}  \label{eq:FPei}
F_{\alpha(P^{e_i})} \equiv 0 \pmod{P^{e_i}}   \quad \text{and} \quad F_{\alpha(P^{e_i})} \not\equiv 0 \pmod{P^{e_i +1}}. 
\end{equation}

The following theorem enables us to determine each $e_i$ if $e_1$ is given, 
and it also implies that the sequence $\{e_{i+1} - e_i\}$ is a geometric sequence with common ratio $p$. 

\begin{theorem} \label{thm:ei}
Assume $\Delta \ne 0$ and $\dfrac{a^2}{b} \not\in \F_q$. 
For every integer $i \ge 1$, we have: 
\begin{itemize}
\item[(a)] if $\Delta \equiv 0 \pmod{P}$, then $e_{i} = \dfrac{(p^i -1)e_1}{p-1}$, and $\alpha(P^{e_i})=p^i$; 

\item[(b)] if $\Delta \not\equiv 0 \pmod{P}$, then $e_{i} = p^{i-1}e_{1}$, and $\alpha(P^{e_i})=p^{i-1}\alpha(P)$.
\end{itemize}
\end{theorem}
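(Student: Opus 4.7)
The plan is to translate ranks into $P$-adic valuations of specific terms, solve an elementary recurrence, and then read off the answer.

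First, by iterating Lemma~\ref{lem:Pj}(a) from $e=1$ upward, I would show $\alpha(P^e)=\alpha(P)\,p^{k(e)}$ for some non-decreasing integer $k(e)\ge 0$ with $k(1)=0$, and combining this with \eqref{eq:Fn0-M},
$$
\alpha(P^e)=\alpha(P)\,p^{k}\iff v_P(F_{\alpha(P)\,p^{k-1}})<e\le v_P(F_{\alpha(P)\,p^{k}}),\quad k\ge 0,
$$
where $v_P$ is the $P$-adic valuation on $\F_q[x]$ and we set $v_P(F_{\alpha(P)p^{-1}}):=0$. Consequently, the threshold $e_i$ equals $v_P(F_{\alpha(P)\,p^{i-1}})$, and $\alpha(P^{e_i})=\alpha(P)\,p^{i-1}$. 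Applying Lemma~\ref{lem:Fnp} with $n=\alpha(P)\,p^{k-1}$ yields the recurrence
$$
v_P(F_{\alpha(P)p^k})=p\,v_P(F_{\alpha(P)p^{k-1}})+v_P(F_p),\qquad k\ge 1,
$$
whose initial value is $v_P(F_{\alpha(P)})=e_1$ by the defining property of $e_1$.

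Second, I would determine $\alpha(P)$ and $v_P(F_p)$ by reducing the sequence modulo $P$ to obtain a generalized Fibonacci sequence over the residue field $K=\F_q[x]/(P)\cong\F_{q^d}$, where $d=\deg P$. In case (a), the reduced discriminant $\bar\Delta$ vanishes, so applying Lemma~\ref{lem:zero}(a) to the reduced sequence yields $\alpha(P)=p$, and hence $v_P(F_p)=v_P(F_{\alpha(P)})=e_1$. The recurrence then telescopes to
$$
v_P(F_{p^k})=e_1\cdot\frac{p^k-1}{p-1},
$$
from which $e_i=(p^i-1)e_1/(p-1)$ and $\alpha(P^{e_i})=p^i$ follow at once. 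In case (b), $\bar\Delta\ne 0$ so the residual roots of $f(X)$ are distinct, and their ratio lies in $\F_{q^{2d}}^*$; its multiplicative order $m$ is therefore coprime to $p$, and Lemma~\ref{lem:Fn} applied over $K$ identifies $\alpha(P)$ with $m$. Since $m\ne 1,p$, one has $F_p\not\equiv 0\pmod P$, i.e.\ $v_P(F_p)=0$, and the recurrence gives $v_P(F_{mp^k})=p^k e_1$. Thus $e_i=p^{i-1}e_1$ and $\alpha(P^{e_i})=p^{i-1}\alpha(P)$, as required.

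The delicate step is the computation $v_P(F_p)=0$ in case (b): it rests on the observation that the residual root-ratio has order coprime to the characteristic, being an element of $\F_{q^{2d}}^*$. Characteristic $2$ requires a moment's care since the handy formula $F_p=\Delta^{(p-1)/2}$ is unavailable, but $F_2=a$ together with $\bar\Delta=\bar a^2$ delivers the same conclusion directly.
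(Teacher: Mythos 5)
Your proposal is correct and follows essentially the same route as the paper: both rest on the identity $F_{np}=(F_n)^pF_p$ (Lemma~\ref{lem:Fnp}) together with the observation that $e_i=v_P(F_{\alpha(P^{e_i})})$, and both determine the input data by reducing modulo $P$ to get $\alpha(P)=p$ and $v_P(F_p)=e_1$ in case (a), versus $v_P(F_p)=0$ in case (b). Your explicit valuation recurrence $v_P(F_{\alpha(P)p^k})=p\,v_P(F_{\alpha(P)p^{k-1}})+v_P(F_p)$ is just a repackaging of the paper's $e_{i+1}=pe_i+e_1$ (resp.\ $e_{i+1}=pe_i$), and your order-of-the-root-ratio argument for $F_p\not\equiv 0\pmod P$ is an equally valid variant of the paper's direct computation $\lambda_1^p-\lambda_2^p=(\lambda_1-\lambda_2)^p\not\equiv 0$.
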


\begin{proof}
    Part (a). 
    Let $d = \deg(P)$. Then, it is well-known that the residue calss ring $\F_q[x]/P\F_q[x]$ is in fact the finite field $\F_{q^d}$ of $q^d$ elements. 
    Since $\Delta = a^2+4b \equiv 0 \pmod{P}$, 
    we know that $f(X)$ has a repeated root modulo $P$ (that is, over the field $\F_{q^d}$), say $\lambda$.
    Moreover, due to $b \not\equiv 0 \pmod{P}$, we have   $\lambda \not\equiv 0$ modulo $P$. 
    As in Lemma~\ref{lem:Fn} and noticing that the residue class ring $\F_q[x]/P\F_q[x]$ is in fact a field, we obtain that $F_n \equiv n\lambda^{n-1} \pmod{P}$ for any integer $n \ge 0$. 
    So, we directly have $F_p \equiv 0 \pmod{P}$, and $F_{n} \not\equiv 0 \pmod{P}$ for any $n$ with $1 \le n <p$. 
Thus, we get $\alpha(P) = p$. 

By the choice of $e_1$, we know that $\alpha(P^{e_1}) = \alpha(P) = p$ and $\alpha(P^{e_1 +1}) =p \alpha(P^{e_1})=p^2$. 
So, we have 
\begin{equation}  \label{eq:Fp}
F_p \equiv 0 \pmod{P^{e_1}}   \quad \text{and} \quad F_p \not\equiv 0 \pmod{P^{e_1 +1}}. 
\end{equation}

Now, for any integer $i \ge 1$, by the choice of $e_{i+1}$ (it indeed exists by Lemma~\ref{lem:ei}), we have 
\begin{equation*}
\alpha(P^{e_{i+1}}) = \alpha(P^{e_i +1}) =p\alpha(P^{e_i}).
\end{equation*} 
Then, using Lemma~\ref{lem:Fnp}, \eqref{eq:FPei} and \eqref{eq:Fp}, we obtain  
\begin{equation*}
F_{\alpha(P^{e_{i+1}})} = F_{p\alpha(P^{e_i})} = (F_{\alpha(P^{e_i})})^p F_p  \equiv 0 \pmod{P^{pe_i + e_1}}, 
\end{equation*}
and 
\begin{equation*}
F_{\alpha(P^{e_{i+1}})} = (F_{\alpha(P^{e_i})})^p F_p \not\equiv 0 \pmod{P^{pe_i + e_1 +1}}.
\end{equation*}
Hence, we have 
\begin{equation}   \label{eq:eei1}
e_{i+1}=pe_i+e_1. 
\end{equation}
Therefore, using \eqref{eq:eei1} repeatedly, we get that for any integer $i \ge 1$, 
\begin{equation*}
e_{i}=(p^{i-1} + \cdots + p + 1)e_1 = \frac{(p^i -1)e_1}{p-1}, 
\end{equation*}
and by definition, $\alpha(P^{e_i})=p^i$ (since $\alpha(P)=p$). 
This completes the proof of Part (a).

    Part (b). 
Since $\Delta \not\equiv 0 \pmod{P}$, 
    the polynomial $f(X)$ has two distinct root modulo $P$, say $\lambda_1$ and $\lambda_2$. 
    As in Lemma~\ref{lem:Fn}, we obtain $F_n \equiv \dfrac{\lambda_1^n-\lambda_2^n}{\lambda_1-\lambda_2} \pmod{P}$. 
Then, we get that $\alpha(P)$ is equal to the least positive integer $n$ such that $\lambda_1^n - \lambda_2^n \equiv 0 \pmod{P}$. 
Since $\lambda_1$ and $\lambda_2$ are distinct modulo $P$, we have $\lambda_1^p -   \lambda_2^p = (\lambda_1 -   \lambda_2)^p \not\equiv 0 \pmod{P}$. 
So, we obtain 
\begin{equation}  \label{eq:Fp0}
F_p \not\equiv 0 \pmod{P}.
\end{equation}

Now, as in proving Part (a) and using Lemma~\ref{lem:Fnp}, \eqref{eq:FPei} and \eqref{eq:Fp0}, we get that for any integer $i \ge 1$, 
\begin{equation}   \label{eq:eei2}
e_{i+1}=pe_i. 
\end{equation}
Thus, we obtain 
\begin{equation*}
e_{i}=p^{i-1} e_1  
\quad  \text{and} \quad
    \alpha(P^{e_i})=p^{i-1} \alpha(P).
\end{equation*}
This completes the proof of Part (b). 
\end{proof}

In Theorem~\ref{thm:ei}, for computing the integer $e_1$, we first find the smallest positive integer $n$ such that $F_n \equiv 0 \pmod{P}$, 
and then $e_1$ is exactly the exponent of $P$ in the prime factorization of $F_n$. 

\begin{remark}  \label{rem:ei}
In the proof of Theorem~\ref{thm:ei}, we only use the condition ``$\Delta \ne 0$ and $\dfrac{a^2}{b} \not\in \F_q$" 
to ensure the existence of the integer $e_i$ for any integer $i \ge 1$.  
So, without this condition, for any integer $i \ge 1$ if the integer $e_i$ exists, then the integer $e_j$ exists for any $1 \le j \le i$, 
and thus $e_i$ can be computed following the two formulas in Theorem~\ref{thm:ei}. 
\end{remark}

Now, we first compute $e_1$ by direct computation, and then using Theorem~\ref{thm:ei} we can compute $e_i$ for any $i \ge 2$. 
Then, noticing the choices of the integers $e_i$,  
we directly obtain the following result about computing the value $\alpha(P^e)$. 

\begin{theorem}  \label{thm:alpha}
Assume $\Delta \ne 0$ and $\dfrac{a^2}{b} \not\in \F_q$. Put $e_0 = 0$. 
Then, for any integer $e \ge 1$, we have that  $e_{i-1} < e \le e_i$ for some integer $i \ge 1$, and moreover, 
\begin{itemize}
\item[(a)] if $\Delta \equiv 0 \pmod{P}$, then  $\alpha(P^e) = \alpha(P^{e_i})=p^i$; 

\item[(b)] if $\Delta \not\equiv 0 \pmod{P}$, then  $\alpha(P^e) = \alpha(P^{e_i})=p^{i-1}\alpha(P)$.
\end{itemize}
\end{theorem}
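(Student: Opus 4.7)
The plan is to observe that Theorem~\ref{thm:alpha} is essentially a packaging of Theorem~\ref{thm:ei}: once the ``jump indices'' $e_i$ are known and their values evaluated, the behavior of $\alpha(P^e)$ on each interval between consecutive jump indices is immediate from the definition of the $e_i$.

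First I would verify that the partition of positive integers into intervals $(e_{i-1}, e_i]$ covers every $e \ge 1$. By Lemma~\ref{lem:ei}, under the standing hypothesis $\Delta \ne 0$ and $a^2/b \notin \F_q$, the integer $e_i$ exists for every $i \ge 1$. Since by construction $e_1 < e_2 < \cdots$ is a strictly increasing sequence of positive integers (the inequality being strict because $e_{i+1}$ is defined as the next index after $e_i$ where $\alpha$ changes), it is unbounded, so for each $e \ge 1$ there is a unique $i \ge 1$ with $e_{i-1} < e \le e_i$ (using the convention $e_0 = 0$).

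Next I would check that $\alpha(P^e)$ is constant on each such interval. By definition, the only positive integers $e$ at which $\alpha(P^e) \ne \alpha(P^{e+1})$ are $e = e_1, e_2, \ldots$. Hence for any $e$ with $e_{i-1} < e \le e_i$, applying this fact repeatedly to the chain $e, e+1, \ldots, e_i$ (none of which except possibly $e_i$ itself is a jump index, and $e_i$ being at the right endpoint does not cause an increase within the interval) gives $\alpha(P^e) = \alpha(P^{e_i})$. With this identification in hand, the formulas for $\alpha(P^{e_i})$ in parts (a) and (b) are exactly the content of Theorem~\ref{thm:ei}(a) and (b) respectively.

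There is no real obstacle here; the statement is a corollary of Theorem~\ref{thm:ei} once one records the trivial observation that $\alpha(P^e)$ is locally constant between consecutive jump indices. The only point requiring a moment of care is the unboundedness of the sequence $\{e_i\}_{i \ge 1}$, which would otherwise leave some large $e$ uncovered; this is precisely what Lemma~\ref{lem:ei} guarantees under the hypothesis $\Delta \ne 0$ and $a^2/b \notin \F_q$.
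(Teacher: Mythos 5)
Your proposal is correct and matches the paper's treatment: the paper gives no explicit proof of Theorem~\ref{thm:alpha}, stating only that it follows ``directly'' from the choices of the $e_i$ together with Theorem~\ref{thm:ei}, and your argument simply fills in the same routine details (existence and unboundedness of the $e_i$ via Lemma~\ref{lem:ei}, local constancy of $\alpha(P^e)$ on each interval $(e_{i-1}, e_i]$, and the values at $e_i$ from Theorem~\ref{thm:ei}).
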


\subsection{Computing $\pi(P^e)$}  \label{sec:pi}
In this section, we still fix an irreducible polynomial $P \in \F_q[x]$ which is relatively prime to $b$.  
We want to compute the value $\pi(P^e)$ for any integer $e \ge 1$.

We first handle the simple case when both $a$ and $b$ are in $\F_q$. 

\begin{theorem} \label{thm:pi0}
Assume that both $a$ and $b$ are in $\F_q$. 
Let $\lambda_1, \lambda_2$ be the two roots of the characteristic polynomial $f(X)$ in $\overline{\F}_q$. 
Then, we have: 
\begin{itemize}
\item[(a)] if $\Delta = 0$, let $m_1$ be the product of $p$ and the order of $\lambda_1$, then for any integer $e \ge 1$, $\pi(P^e)$ divides $m_1$, 
and moreover, $\pi(P^e) = m_1$ for any sufficiently large $e$;  

\item[(b)] if $\Delta \ne 0$, let $m_2$ be the least common multiple of 
the order of $\lambda_2/\lambda_1$ and the order of $\lambda_1$, 
then for any integer $e \ge 1$, $\alpha(P^e)$ divides $m_2$, and moreover, $\pi(P^e) = m_2$ for any sufficiently large $e$. 
\end{itemize}
\end{theorem}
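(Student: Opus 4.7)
The plan is to exploit the observation that when $a, b \in \F_q$, every term $F_n$ of the sequence lies in $\F_q$, which collapses the defining congruences of $\pi(P^e)$ into equalities in $\F_q$. A trivial induction on the recurrence gives $F_n \in \F_q$ for all $n \ge 0$, and since $P \in \F_q[x]$ is non-constant while every nonzero element of $\F_q$ is a unit in $\F_q[x]$, we have for every $e \ge 1$
$$
F_n \equiv 0 \pmod{P^e} \iff F_n = 0, \qquad F_{n+1} \equiv 1 \pmod{P^e} \iff F_{n+1} = 1.
$$
Combined with the characterization \eqref{eq:Fn01-M}, this shows that $\pi(P^e)$ is exactly the least positive integer $n$ for which both $F_n = 0$ and $F_{n+1} = 1$ hold in $\F_q$, a quantity independent of $e$ and of $P$.

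To identify this least $n$, I would apply Lemma~\ref{lem:zero-one} to the roots $\lambda_1, \lambda_2 \in \overline{\F}_q^\ast$ (well-defined and nonzero because $a, b \in \F_q$ and $\lambda_1 \lambda_2 = -b \ne 0$). For part (a), the lemma reduces the conditions to $p \mid n$ and $\ord(\lambda_1) \mid n$; since $\overline{\F}_q^\ast$ has no $p$-torsion, $\ord(\lambda_1)$ is coprime to $p$, and so the smallest such $n$ is $p \cdot \ord(\lambda_1) = m_1$. For part (b), the two divisibility conditions $\ord(\lambda_2/\lambda_1) \mid n$ and $\ord(\lambda_1) \mid n$ together are equivalent to $m_2 \mid n$, so the smallest such $n$ is $m_2$.

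Putting the pieces together, $\pi(P^e) = m_1$ (respectively $\pi(P^e) = m_2$) for every $e \ge 1$, which is actually stronger than the ``sufficiently large $e$'' formulation in the theorem and automatically contains the divisibility claim. The argument poses no real difficulty beyond the reduction to constants; the only subtle point is the coprimality of $p$ with $\ord(\lambda_1)$ in part (a). I would also flag what appears to be a typographical slip in the statement of part (b): ``$\alpha(P^e)$ divides $m_2$'' should evidently read ``$\pi(P^e)$ divides $m_2$'', the $\alpha$-version already being the content of Theorem~\ref{thm:alpha0}(b).
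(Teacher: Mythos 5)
Your proof is correct and follows essentially the same route as the paper: both rest on Lemma~\ref{lem:zero-one} together with \eqref{eq:Fn01-M} and the coprimality of $p$ with $\ord(\lambda_1)$, the paper phrasing the final step via a degree bound on $F_1,\ldots,F_{m-1}$ that, since all $F_n$ lie in $\F_q$, is met for every $e\ge 1$ --- exactly your observation that the congruences collapse to equalities, so your sharper conclusion ``$\pi(P^e)=m_i$ for all $e$'' is consistent with (and implicit in) the paper's argument. Your reading of ``$\alpha(P^e)$ divides $m_2$'' in part (b) as a slip for ``$\pi(P^e)$ divides $m_2$'' is also the natural one.
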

\begin{proof}
Part (a). If $\Delta = 0$, then using Part (a) of Lemma~\ref{lem:zero-one} and noticing that $p$ and the order of $\lambda_1$ are coprime, 
we have that $F_{m_1} = 0$ and $F_{m_1+1} = 1$. 
Then, using \eqref{eq:Fn01-M} we obtain that for any integer $e \ge 1$, $\pi(P^e)$ divides $m_1$. 
Moreover, when 
$$
\deg(P^e) > \max\{\deg(F_1), \deg(F_2), \ldots, \deg(F_{m_1-1})\},
$$
 clearly we have $\pi(P^e) = m_1$.

Part (b). Similarly, one can prove Part (b) by using Part (b) of Lemma~\ref{lem:zero-one} and \eqref{eq:Fn01-M}. 
Moreover, when 
$$
\deg(P^e) > \max\{\deg(F_1), \deg(F_2), \ldots, \deg(F_{m_2-1})\},
$$
 we have $\pi(P^e) = m_2$. 
\end{proof}

We remark that in Theorem~\ref{thm:pi0}, since $\lambda_1, \lambda_2 \in \F_{q^2}$ (due to $a, b \in \F_q$),  
we have that $m_1$ divides $p(q^2-1)$, and $m_2$ divides $q^2-1$.  

Hence, it remains to consider the case when $a$ and $b$ are not both in $\F_q$. 
Our approach is to determine each integer $e$ with $\pi(P^e) \ne \pi(P^{e+1})$. 

Similarly, for any integer $i \ge 1$ we define $e_i^{\prime}$ to be the $i$-th positive integer $e \text{ such that } \pi(P^e) \neq \pi(P^{e+1})$.    

The following lemma ensures the existence of the above integers $e_i^{\prime}$. 

\begin{lemma}   \label{lem:ei'}
Assume that $a$ and $b$ are not both in $\F_q$. 
Then, for any integer $i \ge 1$,  the integer $e_i^{\prime}$ always exists. 
\end{lemma}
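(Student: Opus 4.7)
The plan is to prove this by contradiction, showing that if only finitely many $e_i'$ existed, then the sequence $\{\pi(P^e)\}_{e\ge 1}$ would be eventually constant, and this would force the existence of a positive integer $n$ with $F_n = 0$ and $F_{n+1} = 1$, contradicting Lemma~\ref{lem:zero-one} under our hypothesis on $a$ and $b$.

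First I would observe that by Corollary~\ref{cor:M1M2}, since $P^e \mid P^{e+1}$, we have $\pi(P^e) \mid \pi(P^{e+1})$ for every $e \ge 1$. In particular, the integer sequence $\{\pi(P^e)\}$ is non-decreasing, and $\pi(P^e) \ne \pi(P^{e+1})$ happens precisely at the indices $e = e_i'$. Hence the existence of $e_i'$ for every $i \ge 1$ is equivalent to the statement that $\pi(P^e)$ is \emph{not} eventually constant.

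Suppose for contradiction that some $e_i'$ fails to exist. Then there exist only finitely many values of $e$ where $\pi(P^e)$ strictly increases, so there is an integer $e_0 \ge 1$ and a positive integer $N$ with $\pi(P^e) = N$ for all $e \ge e_0$. By the definition of $\pi$, this means
\[
F_N \equiv 0 \pmod{P^e} \quad \text{and} \quad F_{N+1} \equiv 1 \pmod{P^e}
\]
for every $e \ge e_0$. Equivalently, $P^e$ divides the polynomial $F_N$ and the polynomial $F_{N+1} - 1$ for arbitrarily large $e$. Since $P$ is a non-constant polynomial, the only element of $\F_q[x]$ divisible by $P^e$ for all large $e$ is $0$. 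Therefore $F_N = 0$ and $F_{N+1} = 1$ as elements of $\F_q[x]$.

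But then Lemma~\ref{lem:zero-one} forces both $a$ and $b$ to lie in $\F_q$, contradicting our standing assumption. Hence $\pi(P^e)$ cannot be eventually constant, and consequently the integer $e_i'$ exists for every $i \ge 1$. The only mildly subtle step is articulating the passage from ``$P^e$ divides $F_N$ for arbitrarily large $e$'' to ``$F_N = 0$'' cleanly, but this is immediate since a nonzero polynomial in $\F_q[x]$ has only finitely many irreducible factors counted with multiplicity.
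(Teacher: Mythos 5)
Your proof is correct and follows essentially the same route as the paper: the paper likewise combines Lemma~\ref{lem:zero-one} (no positive $n$ with $F_n=0$ and $F_{n+1}=1$ exactly) with the periodicity of $\{F_n\}$ modulo $P^k$ for every $k$ to conclude that $\pi(P^e)$ cannot stabilize. Your version merely makes explicit, via the contradiction framing and the remark that a nonzero polynomial cannot be divisible by $P^e$ for arbitrarily large $e$, a step the paper leaves implicit.
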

\begin{proof}
Since $a$ and $b$ are not both in $\F_q$, 
by Lemma~\ref{lem:zero-one} we obtain that there is no positive integer $n$ such that $F_n = 0$ and $F_{n+1}=1$.
We also notice that $F_0 = 0, F_1=1$, and the sequence $\{F_n\}$ is a periodic sequence modulo $P^k$ for any integer $k \ge 1$ (due to $\gcd(b, P)=1$). 
Then, for any integer $k \ge 1$, there is a positive integer $n_k$ such that $F_{n_k} \equiv 0, F_{n_k +1} \equiv 1 \pmod{P^k}$. 
This implies that $e_i^{\prime}$ always exists for any integer $i \ge 1$. 
\end{proof}

Recalling the integers $e_i$ defined in Section~\ref{sec:alpha}, we establish the following relation between $e_i$ and $e_i^{\prime}$ for further deductions. 

\begin{lemma}  \label{lem:eiei'}
For any integer $i \ge 1$, if the two integers $e_i, e_i^{\prime}$ both exist, then we have  $e_i^{\prime} \leq e_i$.     
\end{lemma}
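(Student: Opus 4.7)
The plan is to prove the stronger statement that for every $e \ge 1$, the ratio $\beta(P^e)/\beta(P)$ is a non-negative integer power of $p$; the desired inequality $e_i' \le e_i$ will then follow immediately. Indeed, letting $N_\alpha(e)$ and $N_\pi(e)$ count the $\alpha$- and $\pi$-jumps in $\{1,\dots,e\}$, Lemma~\ref{lem:Pj} gives $\alpha(P^{e+1}) = p^{N_\alpha(e)}\alpha(P)$ and $\pi(P^{e+1}) = p^{N_\pi(e)}\pi(P)$, so the stronger claim translates to $N_\pi(e) \ge N_\alpha(e)$ for every $e$, which yields $e_i' \le e_i$ since $e_i'$ and $e_i$ are the positions at which $N_\pi$ and $N_\alpha$ first reach $i$.

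My first step would be to check that $\beta(P)$ is coprime to $p$. By the discussion leading to \eqref{eq:betas}, $\beta(P) = \ord_P(s_1)$ for the scalar $s_1 \in (\F_q[x]/P)^*$ with $U^{\alpha(P)} \equiv s_1 I \pmod P$; since $(\F_q[x]/P)^* \cong \F_{q^d}^*$ (where $d=\deg P$) has order $q^d-1$ coprime to $p$, we conclude $p \nmid \beta(P)$. Next I would pin down the mechanism by which $\beta$ can decrease: by Lemma~\ref{lem:Pj}, each of $\alpha(P^{e+1})/\alpha(P^e)$ and $\pi(P^{e+1})/\pi(P^e)$ lies in $\{1,p\}$, so $\beta(P^{e+1})/\beta(P^e) \in \{1/p,\,1,\,p\}$, and the decreasing case $1/p$ occurs precisely when $\alpha(P^{e+1}) = p\alpha(P^e)$ while $\pi(P^{e+1}) = \pi(P^e)$; in that case the divisibility $\alpha(P^{e+1}) \mid \pi(P^{e+1})$ forces $p\alpha(P^e) \mid \alpha(P^e)\beta(P^e)$, hence $p \mid \beta(P^e)$.

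Finally I would conclude by induction on $e$. If $\beta(P^e)/\beta(P) = p^k$ with $k \ge 1$, then even the worst-case factor $1/p$ yields $\beta(P^{e+1})/\beta(P) = p^{k-1}$, still a non-negative power of $p$. If instead $k = 0$, i.e., $\beta(P^e) = \beta(P)$, then $p \nmid \beta(P^e)$ by the first step, so the decreasing case cannot occur and $\beta(P^{e+1})/\beta(P) \in \{1,p\}$. The main subtlety will be precisely this last point: recognizing that the only mechanism by which $\beta$ can drop requires $p$ already dividing $\beta$, and that this escape route is blocked at the baseline value $\beta(P)$ by the prime-to-$p$ order of the residue field's unit group.
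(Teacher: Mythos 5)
Your proof is correct, and it takes a genuinely different route from the paper's. The paper argues by explicit matrix computation: it writes $U^{\alpha(P)}=sI+P^{e_1}B$ with $B$ non-scalar modulo $P$, raises this to the power $kp^{i-1}$, and uses the Frobenius identity together with the bound $e_i+1\le 2p^{i-1}e_1$ (from Theorem~\ref{thm:ei}) to show that $U^{\pi(P^{e_i'})}$ is not a scalar matrix modulo $P^{e_i+1}$, whence $e_i'\le e_i$. You instead run a soft counting argument on the jump sets: by Lemma~\ref{lem:Pj} each of $\alpha$ and $\pi$ can only jump by a factor of $p$ from one exponent to the next, so $\beta(P^{e+1})/\beta(P^e)\in\{1/p,1,p\}$, and the decreasing case forces $p\mid\beta(P^e)$ because $\beta(P^{e+1})=\ord_{P^{e+1}}(s)$ is an integer by \eqref{eq:betas}; since $\beta(P)=\ord_P(s)$ divides $q^d-1$ and is therefore prime to $p$, an induction shows $\beta(P^e)/\beta(P)$ is always a non-negative power of $p$, i.e.\ the number of $\pi$-jumps up to any level is at least the number of $\alpha$-jumps, which gives $e_i'\le e_i$. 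Your argument is more elementary (no binomial expansion, no appeal to Theorem~\ref{thm:ei} or to the formula for $e_i$) and actually yields two small bonuses: the existence of $e_i$ already implies the existence of $e_i'$, and you get the statement that $\beta(P^e)/\beta(P)\in\{1,p,p^2,\dots\}$ for every $e$, which is of independent interest for Section~\ref{sec:beta}. What the paper's computation buys, by contrast, is the explicit congruence \eqref{eq:Upiei} for $U^{\pi(P^{e_i'})}$ modulo $P^{e_i+1}$, machinery that is reused almost verbatim in the proof of Theorem~\ref{thm:ei'}; your approach would not supply that.
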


\begin{proof}
    Since the integer $e_i$ exists, we have that the integer $e_1$ also exists. 
    By the choice of the integer $e_1$, we have $\alpha(P)=\cdots=\alpha(P^{e_1})\neq\alpha(P^{e_1+1})$.  
    Let $n = \alpha(P)$. 
    Recall the matrix $U$ defined at the beginning of Section~\ref{sec:pre}. 
    Then, due to the choice of $n$, we have 
    $$
    U^n=sI+P^{e_1}B,
    $$ 
    where $s \in \F_q[x]$ with $\gcd(s,P)=1$, and $B$ is not a scalar matrix modulo $P$.
    
    Since the integer $e_i$ exists, noticing the choice of $e_i$ we obtain that $\alpha(P^{e_i}) =  p^{i-1} \alpha(P^{e_1}) =  p^{i-1} \alpha(P) = np^{i-1}$ and 
    $$
    U^{\alpha(P^{e_i})} = U^{np^{i-1}} \not\equiv \textrm{a scalar matrix} \pmod{P^{e_i +1}}, 
    $$
    which becomes 
    \begin{align*}
       U^{np^{i-1}} = (sI+P^{e_1}B)^{p^{i-1}} & =  s^{p^{i-1}}I+P^{p^{i-1}e_1}B^{p^{i-1}} \\
       & \not\equiv \textrm{a scalar matrix} \pmod{P^{e_i +1}}. 
    \end{align*}
    So, we have 
    \begin{equation}  \label{eq:PBe_i}
    P^{p^{i-1}e_1}B^{p^{i-1}} \not\equiv \textrm{a scalar matrix} \pmod{P^{e_i +1}}. 
    \end{equation}
    In addition, by Theorem~\ref{thm:ei} we have 
   $$
       e_i \le \frac{(p^i -1)e_1}{p-1} < 2p^{i-1}e_1, 
   $$
   which gives
   \begin{equation}  \label{eq:eie1}
      e_i + 1 \le 2p^{i-1} e_1. 
   \end{equation}
    
    Since the integer $e_i^{\prime}$ exists, so does the integer $e_1^{\prime}$.     Denote $k= \ord_{P}(s)$. 
    Since $\F_q[x]/P\F_q[x]$ is a finite field of characteristic $p$, we know that $p \nmid k$. 
   Noticing the choice of $e_i^{\prime}$ and using \eqref{eq:betas}, we obtain 
    $$
    \pi(P^{e_i^{\prime}}) =  p^{i-1} \pi(P^{e_1^{\prime}}) = p^{i-1} \pi(P) = p^{i-1}\alpha(P) \ord_P(s) = knp^{i-1}. 
    $$
    Then, we deduce that 
    \begin{equation}  \label{eq:Upiei}
      \begin{split}
         & U^{\pi(P^{e_i^{\prime}})}   = U^{knp^{i-1}}  = (s^{p^{i-1}}I+P^{p^{i-1}e_1}B^{p^{i-1}})^k \\ 
         & \quad \equiv s^{p^{i-1}k}I + ks^{p^{i-1}(k-1)}P^{p^{i-1}e_1}B^{p^{i-1}}  \quad \textrm{(due to \eqref{eq:eie1})}\\
         & \quad \not\equiv \textrm{a scalar matrix} \pmod{P^{e_i +1}} \quad \textrm{(due to \eqref{eq:PBe_i}),}
     \end{split}    
    \end{equation}
  where in the non-equivalence we also use the facts: $p \nmid k$ and $P \nmid s$.  
  On the other hand, by definition we directly have 
  $$
  U^{\pi(P^{e_i^{\prime}})} \equiv I \pmod{P^{e_i^{\prime}}}, 
  $$
  which, together with \eqref{eq:Upiei}, implies that 
  $$
  e_i^{\prime} \le e_i. 
  $$
\end{proof}

\begin{remark}  
\label{rem:eiei'}
Lemma~\ref{lem:eiei'} tells us that $e_i / e_i^{\prime} \ge 1$ whenever they both exist. 
In fact, $e_1 / e_1^{\prime}$ can be arbitrary large, and so is $e_i / e_i^{\prime}$. 
For example, choose $p=3$, $a=x+1$, $P = x+2$, and $b=2x^2+x+2+P^j$ for any fixed integer $j > 3$, 
then compute the first five terms: $F_0=0$, $F_1=1$, $F_2 = x+1$, $F_3 = P^j$, $F_4 = 2x^3+2 + 2(x+1)P^j \equiv 1 \pmod{P^3}$, 
and so we have $\alpha(P)=3$ and $e_1=j$, and $\pi(P)=3$ and $e_1^{\prime} = 3$. 
Hence, in this example, $e_1/e_1^{\prime} = j/3$, which tends to the infinity as $j$ goes to the infinity. 
\end{remark}

We also need the following lemma. 

\begin{lemma}
\label{lem:ei'p}
For any integer $i \ge 1$, if the three integers $e_i^{\prime}, e_{i+1}^{\prime}, e_{i+2}^{\prime}$ all exist 
and moreover $e_{i+1}^{\prime} = pe_{i}^{\prime}$, then we have  $e_{i+2}^{\prime} = pe_{i+1}^{\prime}$.     
\end{lemma}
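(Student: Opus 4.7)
The plan is to encode each of $e_i', e_{i+1}', e_{i+2}'$ as a matrix identity of the form
$U^{\pi(P^{e_j'})} = I + P^{e_j'} C_j$ with $C_j \not\equiv 0 \pmod{P}$, and then apply the characteristic-$p$ identity
$(I + P^{e}M)^p = I + P^{pe}M^p$ twice in succession. Here $C_j$ is a $2\times 2$ matrix over $\F_q[x]$. Such a presentation exists by the definition of $\pi(P^{e_j'})$ (which gives the congruence modulo $P^{e_j'}$ via \eqref{eq:UnI}) together with the defining property of $e_j'$ (which, through Lemma~\ref{lem:Pj}(b), \eqref{eq:UnI} and Corollary~\ref{cor:M1M2}, forces $U^{\pi(P^{e_j'})} \not\equiv I \pmod{P^{e_j'+1}}$ and also $\pi(P^{e_{j+1}'}) = p\,\pi(P^{e_j'})$ for $j \in \{i, i+1\}$).

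\textbf{Iterating the identity.} Since $I$ commutes with $P^{e_i'}C_i$ and the middle binomial coefficients vanish in characteristic $p$,
$$
U^{\pi(P^{e_{i+1}'})} = (U^{\pi(P^{e_i'})})^p = (I+P^{e_i'}C_i)^p = I + P^{pe_i'}C_i^p.
$$
Comparing with $I + P^{e_{i+1}'}C_{i+1}$ under the hypothesis $e_{i+1}' = p e_i'$ (and using that $\F_q[x]$ is a domain) yields $C_{i+1} = C_i^p$, and since $C_{i+1} \not\equiv 0 \pmod{P}$, this already forces $C_i^p \not\equiv 0 \pmod{P}$. Applying the same identity once more,
$$
U^{\pi(P^{e_{i+2}'})} = (I + P^{e_{i+1}'}C_{i+1})^p = I + P^{p e_{i+1}'}C_{i+1}^p = I + P^{p^2 e_i'}C_i^{p^2},
$$
so comparison with $I + P^{e_{i+2}'}C_{i+2}$ (where $C_{i+2} \not\equiv 0 \pmod{P}$) reduces the desired conclusion $e_{i+2}' = p e_{i+1}'$ to the single assertion $C_i^{p^2} \not\equiv 0 \pmod{P}$.

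\textbf{The main obstacle.} The genuinely non-routine ingredient is to upgrade $C_i^p \not\equiv 0 \pmod{P}$ to $C_i^{p^2} \not\equiv 0 \pmod{P}$. My plan is a short Cayley--Hamilton argument over the residue field $k = \F_q[x]/P\F_q[x]$: writing $\bar{C}$ for the reduction of $C_i$ modulo $P$, if $\bar{C}$ were nilpotent then, being a nonzero $2\times 2$ matrix, its minimal polynomial would divide $X^2$, so $\bar{C}^2 = 0$ and hence $\bar{C}^p = 0$ for every $p \ge 2$, contradicting $C_i^p \not\equiv 0 \pmod{P}$; thus $\bar{C}$ is not nilpotent, so no power of $\bar{C}$ vanishes and in particular $\bar{C}^{p^2} \ne 0$. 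This is the only step that genuinely exploits $U$ being $2 \times 2$; everything else is a direct computation.
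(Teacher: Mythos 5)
Your proposal is correct and follows essentially the same route as the paper: write $U^{\pi(P^{e_j^{\prime}})}=I+P^{e_j^{\prime}}C_j$ with $C_j\not\equiv 0\pmod{P}$, apply $(I+P^{e}C)^p=I+P^{pe}C^p$ twice, and reduce everything to showing $C_i^{p^2}\not\equiv 0\pmod{P}$. The only (cosmetic) difference is in that last step, where the paper passes to the Jordan form of $C_i$ modulo $P$ and looks at eigenvalues, while you observe via Cayley--Hamilton that a $2\times 2$ matrix over a field whose $p$-th power is nonzero cannot be nilpotent, so no power of it vanishes; both arguments are sound and equivalent in substance.
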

\begin{proof}
First, by the choice of $e_i^{\prime}$,  we can write
$$
U^{\pi(P^{e_i^{\prime}})} = I + P^{e_i^{\prime}}B
$$
for some matrix $B$ satisfying $B \not\equiv 0 \pmod{P}$. 
Then, using the assumption $e_{i+1}^{\prime} = pe_{i}^{\prime}$, we have 
$$
U^{\pi(P^{e_{i+1}^{\prime}})} = U^{p\pi(P^{e_i^{\prime}})} = (I + P^{e_i^{\prime}}B)^p = I + P^{pe_i^{\prime}}B^p =  I + P^{e_{i+1}^{\prime}}B^p.
$$
By the choice of $e_{i+1}^{\prime}$, this implies that 
\begin{equation}
\label{eq:Bp0}
B^p \not\equiv 0 \pmod{P}.
\end{equation}

Now, let $d = \deg(P)$. We know that $\F_q[x]/P\F_q[x]$ is the finite field $\F_{q^d}$. 
Let $\lambda_1, \lambda_2$ be the two eigenvalues of the matrix $B \pmod{P}$ (view it as a matrix over $\F_{q^d}$). 
That is,  $\lambda_1, \lambda_2$ are some elements in some extension of $\F_{q^d}$ (they might be the same). 
Note that for some invertible matrix $T$, $T^{-1}BT \pmod{P}$ is either a diagonal form or a Jordan form. 
Then,  we have 
$$
B^p  \equiv T \begin{bmatrix}
        \lambda_1^p & 0 \\
        0 & \lambda_2^p
    \end{bmatrix} T^{-1}  \pmod{P}. 
$$
From \eqref{eq:Bp0}, we have either $\lambda_1 \ne 0$ or $\lambda_2 \ne 0$. 
So, we get 
$$
B^{p^2}  \equiv T \begin{bmatrix}
        \lambda_1^{p^2} & 0 \\
        0 & \lambda_2^{p^2}
    \end{bmatrix} T^{-1} \not\equiv 0  \pmod{P}. 
$$
That is, $B^{p^2} \not\equiv 0 \pmod{P}$. 
Hence, noticing 
$$
U^{\pi(P^{e_{i+2}^{\prime}})} = U^{p\pi(P^{e_{i+1}^{\prime}})} = (I + P^{e_{i+1}^{\prime}}B^p)^p = I + P^{pe_{i+1}^{\prime}}B^{p^2}, 
$$
we obtain 
$$
e_{i+2}^{\prime} = pe_{i+1}^{\prime}. 
$$
This completes the proof. 
\end{proof}

Now, we are ready to determine each integer $e_i^{\prime}$.   
Recall that $p$ is the characteristic of $\F_q$. 
For any non-zero polynomial $g(x) \in \F_q[x]$, let $v_P(g)$ be the maximal integer $n$ such that $P^n \mid g$. 
As usual, put $v_P(0) = + \infty$. 

\begin{theorem} \label{thm:ei'}
    Assume that $a$ and $b$ are not both in $\F_q$. 
    Then,  we have:
\begin{itemize}
    \item[(a)] if the integer $e_1$ does not exist (including the case $\Delta = 0$), then  for any integer $i \ge 1$, $e_i^{\prime} = p^{i-1}e_1^{\prime}$;

    \item[(b)] if $e_1^{\prime} < e_1$, then  for any integer $i \ge 1$, $e_i^{\prime} = p^{i-1}e_1^{\prime}$;  
        
    \item[(c)] if $\Delta \not\equiv 0 \pmod{P}$, then  for any integer $i \ge 1$, $e_i^{\prime} = p^{i-1}e_1^{\prime}$;
    
    \item[(d)] if $p>2$,  $\Delta \equiv 0 \pmod{P}$ and $e_1^{\prime} = e_1$,  let $k=\ord_P(a/2)$ and $m = v_P((a/2)^k -1)$, then for any integer $i \ge 2$, 
           \begin{equation*}
                  e_{i}^{\prime} = \min\left\{ mp^i,   \frac{(p^i - 1)e_1}{p-1} \right\};
          \end{equation*}    
    
    \item[(e)] if $p=2$,  $\Delta \equiv 0 \pmod{P}$ and $e_1^{\prime} = e_1$, let $k=\ord_P(b)$ and $m = v_P(b^k -1)$, 
      then we have $m \ge e_1$; and moreover,  if $m > e_1$, then for any $i \ge 2$,  
      $$
         e_{i}^{\prime} =    2^{i-1}e_1; 
      $$
      otherwise if $m = e_1$, let $c_1 = b^k - 1$, then for any $i \ge 2$,  
      $$
         e_{i}^{\prime} = \min\left\{(2^i - 1)e_1, \ v_P(c_i) \right\}, 
      $$
      where $c_i = a^{2^i - 2} + bc_{i-1}^2$. 
\end{itemize}
\end{theorem}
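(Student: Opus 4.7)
The plan is to exploit the identity $U^{\pi(P^{e_i'})} = I + P^{e_i'} B$, where $B \not\equiv 0 \pmod P$ by the definition of $e_i'$. In characteristic $p$, raising to the $p$-th power gives $U^{p\pi(P^{e_i'})} = I + P^{pe_i'} B^p$, so the key dichotomy is whether $B^p \equiv 0 \pmod P$ or not. When $B^p \not\equiv 0 \pmod P$ one has $e_{i+1}' = pe_i'$, and Lemma~\ref{lem:ei'p} propagates this equality to all subsequent indices. Hence for parts (a), (b), (c) my plan is to verify $B^p \not\equiv 0 \pmod P$ at $i = 1$, which immediately yields the asserted $e_i' = p^{i-1}e_1'$.

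For (a), the non-existence of $e_1$ (by Lemma~\ref{lem:ei}) means $\alpha(P^e)$ is constant in $e$, and since the off-diagonal entries of $U^{\alpha(P)}$ must then be divisible by $P^e$ for every $e$, they vanish as polynomials in $\F_q[x]$, so $U^{\alpha(P)} = sI$ is a genuine scalar matrix over $\F_q[x]$. The deviation matrix $B$ then reduces to a non-zero scalar $((s^k - 1)/P^{e_1'}) I$ modulo $P$, with $k = \ord_{P^{e_1'}}(s)$, so $B^p$ is also a non-zero scalar. For (b), the hypothesis $e_1' < e_1$ ensures $U^{\alpha(P)}$ is still scalar modulo $P^{e_1'+1}$, so the same scalar-matrix argument applies verbatim. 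For (c), use the identity $U^n = F_n U + bF_{n-1}I$ to write $B = uU + vI$ for explicit $u, v \in \F_q[x]$; then
\[
B^p = u^p U^p + v^p I = u^p F_p\, U + \bigl(u^p bF_{p-1} + v^p\bigr) I,
\]
and since $\Delta \not\equiv 0 \pmod P$ implies $\bar{F}_p = (\bar\lambda_1 - \bar\lambda_2)^{p-1} \not\equiv 0 \pmod P$ via Lemma~\ref{lem:Fn} applied in the residue field, the vanishing $B^p \equiv 0 \pmod P$ would force $u \equiv v \equiv 0 \pmod P$, contradicting $B \not\equiv 0 \pmod P$.

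Parts (d) and (e) treat the subtle regime $\Delta \equiv 0 \pmod P$ with $e_1' = e_1$, where $B$ may be nilpotent mod $P$. The plan is to iterate the scalar-matrix analysis with $U^{p^i}$ in place of $U^{\alpha(P)}$: by Lemma~\ref{lem:Fnp}, $F_{p^i} = F_p^{(p^i-1)/(p-1)}$, so $U^{p^i}$ is a scalar matrix mod $P^e$ precisely when $e \le e_i = (p^i-1)e_1/(p-1)$, and in this band $\pi(P^e) = p^i \ord_{P^e}(bF_{p^i-1})$. The recursion $U^{p^{i+1}} = (U^{p^i})^p$ in characteristic $p$ produces
\[
bF_{p^{i+1}-1} = F_{p^i}^p \cdot bF_{p-1} + (bF_{p^i-1})^p.
\]
For (d) with $p > 2$, the identity $\lambda_1^p + \lambda_2^p = (\lambda_1 + \lambda_2)^p = a^p$ in characteristic $p$ gives $bF_{p-1} \equiv (a/2)^p \pmod{P^{e_1}}$, hence $(bF_{p-1})^k \equiv ((a/2)^k)^p \equiv 1 + P^{mp}\mu^p \pmod{P^{e_1}}$ where $(a/2)^k - 1 = P^m \mu$ with $P \nmid \mu$. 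Propagating this through the recursion, $(bF_{p^i-1})^k - 1$ picks up a ``scalar-propagation'' contribution of valuation $mp^i$ (inherited from $(a/2)^{p^ik}$) and a ``rank-jump'' contribution of valuation $e_i$ (coming from the $F_{p^{i-1}}^p\cdot bF_{p-1}$ correction), and $e_i'$ is the first $e$ where neither contribution vanishes, namely $\min\{mp^i, e_i\}$. For (e), the same framework operates in characteristic $2$: here $a \equiv 0 \pmod P$ (since $\Delta = a^2$ mod $P$), so $\bar\lambda = \sqrt b$ and the analogue of $(a/2)^p$ is essentially $b$, yielding the explicit recursion $c_i = a^{2^i-2} + bc_{i-1}^2$ tracking the residual correction term.

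The main obstacle will be the precise bookkeeping in (d) and (e): tracking how the two valuation contributions interact through the iterated recursion, and identifying the first index at which they dominate one another versus combining to give a genuine $\min$. The characteristic $2$ case is additionally delicate because the absence of an $a/2$ substitute and the need to distinguish $m = e_1$ from $m > e_1$ reflect a phenomenon specific to characteristic $2$ that demands a separate case analysis.
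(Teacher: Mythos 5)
Your treatment of parts (a)--(c) is correct, and for part (c) it takes a genuinely different and arguably cleaner route than the paper. Writing $U^{\pi(P^{e_1^{\prime}})}-I=P^{e_1^{\prime}}B$ with $B=uU+vI$, $u=F_{\pi(P^{e_1^{\prime}})}/P^{e_1^{\prime}}$, $v=(bF_{\pi(P^{e_1^{\prime}})-1}-1)/P^{e_1^{\prime}}$, and using $B^p=u^pF_p\,U+(u^pbF_{p-1}+v^p)I$ together with $F_p\not\equiv 0\pmod{P}$ (valid since $\Delta\not\equiv 0\pmod{P}$), you get $B^p\not\equiv 0\pmod{P}$ in one stroke; feeding $e_2^{\prime}=pe_1^{\prime}$ into Lemma~\ref{lem:ei'p} then avoids the paper's case split on whether $a^2/b\in\F_q$ and its appeal to Lemma~\ref{lem:eiei'}. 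Your scalar-matrix arguments for (a) and (b) match the paper's in substance; the only difference is that you establish a single jump and propagate via Lemma~\ref{lem:ei'p}, while the paper computes every jump from $\ord_{P^{e}}(s)$ directly.

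For (d) and (e), however, what you have is a plan, and the step you defer is exactly where the content lies. The assertion that ``$e_i^{\prime}$ is the first $e$ where neither contribution vanishes, namely $\min\{mp^i,e_i\}$'' is not justified: when the two contributions have equal valuation (the boundary case $mp^i=e_i$) they could cancel in the $I$-component, and nothing in your sketch rules this out or shows that the answer is still the minimum. (Your framework can be rescued: in the coordinates $U^{kp^i}-I=(\cdot)\,I+(\cdot)\,U$, the $U$-coefficient $k(bF_{p^i-1})^{k-1}F_{p^i}+\cdots$ has valuation exactly $e_i$ because $p\nmid k$, so the minimum over the two coefficients is $\min\{mp^i,e_i\}$ regardless of cancellation in the $I$-coefficient --- but this has to be said.) You also need, and do not prove, that the ``rank-jump'' error $bF_{p^i-1}-(a/2)^{p^i}$ has valuation at least $e_i$; the paper extracts the corresponding fact from Lemma~\ref{lem:Fnn} via a square-root argument, and in the regime $mp^i\ge e_i$ it must additionally invoke the upper bound $e_i^{\prime}\le e_i$ from Lemma~\ref{lem:eiei'}, which your sketch never uses. (The trace identity $bF_{p^i-1}+F_{p^i+1}=a^{p^i}$ would give you the error term $-aF_{p^i}/2$ exactly, which is a shortcut worth adopting.) For (e) the gaps are larger still: the inequality $m\ge e_1$, the dichotomy $m>e_1$ versus $m=e_1$, and the identification of the recursion $c_i=a^{2^i-2}+bc_{i-1}^2$ with an explicit expansion of $U^{2^ik}$ are all asserted as ``essentially'' following, but none is derived. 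These are concrete missing arguments, not routine bookkeeping.
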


\begin{proof}
   First, since $a$ and $b$ are not both in $\F_q$, by Lemma~\ref{lem:ei'} we know that for any integer $i \ge 1$, the integer $e_i^{\prime}$ exists. 
   
   Part (a).  
   Let  $n = \alpha(P)$. 
   Since the integer $e_1$ does not exist (by Part (a) of Theorem~\ref{thm:alpha0}, the case $\Delta = 0$ satisfies this condition), 
   we have that $\alpha(P^e) = n$ for any integer $e \ge 1$.    
   Then, by definition, $U^n$ is a scalar matrix modulo $P^e$ for any integer $e \ge 1$. 
   So, $U^n$ must be a scalar matrix itself. 
   
   Assume that $U^n = sI$ for some polynomial $s \in \F_q[x]$. 
    Then, by \eqref{eq:betas}  we have that for any integer $e \ge 1$, 
    \begin{equation*}
       \pi(P^e) = \alpha(P^e)\ord_{P^e}(s) = n\ord_{P^e}(s). 
    \end{equation*}
    So, for any integer $i \ge 1$ we obtain   $\pi(P^{e_i^{\prime}}) = n \ord_{P^{e_i^{\prime}}}(s)$. 
    On the other hand, by definition we have $\pi(P^{e_i^{\prime}}) = p^{i-1} \pi(P^{e_1^{\prime}}) = p^{i-1}n \ord_{P^{e_1^{\prime}}}(s) = knp^{i-1}$, 
    where $k=\ord_{P^{e_1^{\prime}}}(s)$.  
    Hence, we obtain 
    \begin{equation}  \label{eq:ord-s}
           \ord_{P^{e_i^{\prime}}}(s) = kp^{i-1}. 
    \end{equation}
    In addition, due to $k = \ord_{P^{e_1^{\prime}}}(s)$, we obtain that for any integer $i \ge 1$, 
    \begin{equation}  \label{eq:skpi}
       s^{kp^{i-1}} \equiv 1 \pmod{P^{p^{i-1}e_1^{\prime}}}   \quad \textrm{and} \quad  s^{kp^{i-1}} \not\equiv 1 \pmod{P^{p^{i-1}e_1^{\prime}+1}}.
    \end{equation}
    This, together with \eqref{eq:ord-s} and the choice of $e_i^{\prime}$, gives 
    $$
       e_i^{\prime} = p^{i-1}e_1^{\prime}. 
    $$
    This completes the proof of Part (a).

    Part (b). 
    By the choice of the integer $e_1$, we have $\alpha(P)=\cdots=\alpha(P^{e_1})\neq\alpha(P^{e_1+1})$.  
    Let $n = \alpha(P)$. 
    Then, due to the choice of $n$, we have 
    \begin{equation}  \label{eq:Un}
           U^n=sI+P^{e_1}B,
    \end{equation}
    where $s \in \F_q[x]$ with $k=\ord_{P^{e_1^{\prime}}}(s)$,   and $B$ is not a scalar matrix modulo $P$.
    
    For any integer $i \ge 1$, by the choice of the integer $e_i^{\prime}$ and using $e_1^{\prime} < e_1$, we have 
    $$
      \pi(P^{e_i^{\prime}}) = p^{i-1}\pi(P^{e_1^{\prime}}) = p^{i-1}\alpha(P^{e_1^{\prime}}) \ord_{P^{e_1^{\prime}}}(s) = knp^{i-1}.
    $$ 
   Since $e_1^{\prime} < e_1$, we have $p^{i-1}e_1^{\prime} + 1 \le p^{i-1}e_1$. 
   Then, combining this with \eqref{eq:skpi}, we obtain 
   \begin{equation}  \label{eq:Upi}
   \begin{split}
         U^{\pi(P^{e_i^{\prime}})} = U^{knp^{i-1}} & = (sI + P^{e_1}B)^{kp^{i-1}} = (s^{p^{i-1}}I + P^{p^{i-1}e_1}B^{p^{i-1}})^{k} \\
         & \equiv s^{kp^{i-1}}I \equiv I  \pmod{P^{p^{i-1}e_1^{\prime}}}, 
   \end{split}
   \end{equation}
   and
   \begin{equation}  \label{eq:Upi+}
         U^{\pi(P^{e_i^{\prime}})}  \equiv s^{kp^{i-1}}I \not\equiv I  \pmod{P^{p^{i-1}e_1^{\prime} + 1}}. 
   \end{equation}
   From \eqref{eq:Upi} and \eqref{eq:Upi+} and noticing the choice of the integer $e_i^{\prime}$, we get 
   $$
       e_i^{\prime} = p^{i-1}e_1^{\prime}. 
    $$
    This completes the proof of Part (b).

   Part (c).  
   By Part (a) and Part (b) and using Lemma~\ref{lem:eiei'}, we only need to consider the case when $\Delta \not\equiv 0 \pmod{P}$ and $e_1^{\prime} = e_1$. 
   We prove the desired result of this case by considering two subcases: Case (c1) and Case (c2). 
   In Case (c1), we assume $\dfrac{a^2}{b} \not\in \F_q$. 
   In Case (c2), we assume $\dfrac{a^2}{b} \in \F_q$. 
   
   Case (c1): $\dfrac{a^2}{b} \not\in \F_q$. In this case, by Lemma~\ref{lem:ei} we know that for any integer $i \ge 1$, the integer $e_i$ exists; 
   and by Part (b) of Theorem~\ref{thm:ei} we have $e_i = p^{i-1}e_1$. 
   For any integer $i \ge 1$, by definition we have $U^{\pi(P^{e_i^{\prime}})} \equiv I \pmod{P^{e_i^{\prime}}}$. 
   Then, we have 
   $$
      U^{\pi(P^{e_{i+1}^{\prime}})} = U^{p\pi(P^{e_{i}^{\prime}})}\equiv I \pmod{P^{pe_i^{\prime}}}, 
   $$
   which implies that 
   \begin{equation}    \label{eq:eei'+}
      e_{i+1}^{\prime} \ge p e_{i}^{\prime}.
   \end{equation}
   
   Noticing $e_1^{\prime} = e_1$ and using Lemma~\ref{lem:eiei'} and \eqref{eq:eei'+}, we obtain 
   $$
     pe_1 = p e_{1}^{\prime} \le e_{2}^{\prime} \le e_2 = pe_1. 
   $$
   So, we have $e_{2}^{\prime} = p e_{1}^{\prime} = e_2$.    
   Similarly,  step by step we get that for any integer $i \ge 1$, 
   $$
       e_i^{\prime} = p^{i-1}e_1^{\prime} = e_i. 
    $$
    This completes the proof of Case (c1).

   Case (c2): $\dfrac{a^2}{b} \in \F_q$. In this case, by Part (b) of Theorem~\ref{thm:alpha0} and noticing the assumption on $e_1$, 
   we know that there exists a positive integer, say $j$, such that the integers $e_1, \ldots, e_j$ exist, but the integers $e_{j+1}, e_{j+2}, \ldots$ do not exist. 
   
   By Remark~\ref{rem:ei}, we have that for any integer $i$ with $1 \le i \le j$, $e_i = p^{i-1}e_1$.  
   Then, as in the proof of Case (c1) we obtain that 
   \begin{equation}   \label{eq:eij1}
            e_i^{\prime} = p^{i-1}e_1^{\prime}, \quad 1 \le i \le j. 
   \end{equation}
   Due to the choice of $j$ and let $n = \alpha(P)$, we know that for any integer $e > e_j$, 
   $$
      \alpha(P^e) = \alpha(P^{e_j+1}) = p\alpha(P^{e_j}) = p^j \alpha(P) = np^j. 
   $$
   This implies that $U^{np^j}$ is a scalar matrix modulo $P^e$ for any $e > e_j$. 
   So,  $U^{np^j}$ must be a scalar matrix. 
   Then, as in the proof of Part (a) we get that 
   \begin{equation}  \label{eq:eij2}
            e_i^{\prime} = p^{i-j}e_{j}^{\prime}, \quad i \ge  j. 
   \end{equation}
   Combining \eqref{eq:eij1} with \eqref{eq:eij2}, we obtain that   for any integer $i \ge 1$, 
   $$
       e_i^{\prime} = p^{i-1}e_1^{\prime}. 
    $$ 
    This completes the proof of Case (c2) and also the proof of Part (c).

   Part (d). 
  The existence of $e_1$ implies $\Delta \ne 0$ (noticing Part (a) of Theorem~\ref{thm:alpha0}).  
  If   $\dfrac{a^2}{b} \in \F_q$, then $a^2 = cb$ for some $c \in \F_q$, and so $\Delta = a^2 + 4b = (c+4)b$ with $c+4 \in \F_q$, 
  which, together with $\gcd(b,P)=1$ and $\Delta \equiv 0 \pmod{P}$, implies that $c+4 = 0$, and thus $\Delta = 0$, contradicting with $\Delta \ne 0$. 
  So, we must have $\dfrac{a^2}{b} \not\in \F_q$. 
  Hence, by Lemma~\ref{lem:ei} we know that for any integer $i \ge 1$, the integer $e_i$ exists. 
  
  Since $\Delta = a^2 + 4b \equiv 0 \pmod{P}$ and $p$ is odd,  the characteristic polynomial $f(X) = X^2 - aX - b$ has the repeated root $a/2$ modulo $P$. 
   Then, we can put the matrix $U$ into Jordan form modulo $P$: $U \equiv TJT^{-1} \pmod{P}$ for some invertible matrix $T$ and  
   $J =
    \begin{bmatrix}
        a/2 & 1 \\
        0 & a/2
    \end{bmatrix}$.
    Noticing $J^p = (a/2)^p I$, we have 
    \begin{equation}  \label{eq:Up}
       U^p \equiv (TJT^{-1})^p \equiv TJ^pT^{-1} \equiv (a/2)^p I  \pmod{P}. 
    \end{equation}
    Due to $\Delta \equiv 0 \pmod{P}$, by Part (a) of Theorem~\ref{thm:ei} we have $\alpha(P) = p$. 
    So, we have that $F_p \equiv 0 \pmod{P^{e_1}}$ and $\gcd(a/2, P)=1$. 
    Recall that we have assumed $k = \ord_P(a/2)$. 
    Then, we obtain that $P \mid (a/2)^k - 1$ and 
    $$ 
       \ord_P((a/2)^p) = \ord_P(a/2) = k.
    $$  
    Combining this with \eqref{eq:betas} and \eqref{eq:Up}, we get 
    $$
       \pi(P) = p\ord_P((a/2)^p) = kp. 
    $$
    
    By Lemma \ref{lem:FnDelta} and noticing $F_p \equiv 0 \pmod{P^{e_1}}$,  
    we have that $2^{p-1}F_p =\Delta^{(p-1)/2}\equiv 0\pmod{P^{e_1}}$,
    that is, $a^2+4b = \Delta \equiv 0 \pmod{P^{\lceil 2e_1/(p-1) \rceil}}$, where $\lceil 2e_1/(p-1) \rceil$ is the smallest integer not less than $2e_1/(p-1)$. 
    Thus, there exists a polynomial $h \in \F_q[x]$ such that $a^2+4b = h P^{\lceil 2e_1/(p-1) \rceil}$, which gives 
    $$
      -b=\frac{a^2}{4}-\frac{h}{4}P^{\lceil 2e_1/(p-1) \rceil}.
    $$
    
    For any integer $i \ge 1$, by Part (a) of Theorem~\ref{thm:ei}, we have  $e_i=\dfrac{(p^i-1)e_1}{p-1}$ and $\alpha(P^{e_i}) = p^{i}$. 
    Clearly, we have 
    \begin{equation}  \label{eq:piei}
       p^i \lceil 2e_1/(p-1) \rceil \ge p^i \cdot 2e_1/(p-1) > e_i. 
    \end{equation}   
    Then, using Lemma~\ref{lem:Fnn} (with $n = kp^i$ there) and noticing $F_{kp^i}\equiv0 \pmod{P^{e_i}}$,
    we obtain 
    \begin{equation}  \label{eq:bFkp}
    \begin{split}
        (bF_{kp^i -1})^2&\equiv(-b)^{kp^i}  \equiv \left( \frac{a^2}{4}-\frac{h}{4}P^{\lceil 2e_1/(p-1) \rceil} \right)^{kp^i}  \\ 
        &\equiv \left( (\dfrac{a^2}{2})^{2p^i}-(\dfrac{h}{4})^{p^i}P^{p^i \lceil 2e_1/(p-1) \rceil} \right)^k  \\
        &\equiv (\frac{a}{2})^{2kp^i}    \pmod{P^{e_i}} \quad \textrm{(due to \eqref{eq:piei})}.
    \end{split}    
    \end{equation} 
    In addition, since $\pi(P^{e_i^{\prime}})=p^{i-1}\pi(P^{e_1^{\prime}}) = p^{i-1}\pi(P) = kp^i$, 
    we have $F_{kp^i} \equiv 0 \pmod{P}$ and $F_{kp^i+1} \equiv 1 \pmod{P}$. 
    Noticing $F_{kp^i+1} = aF_{kp^i} + bF_{kp^i - 1}$, we get 
    \begin{equation}  \label{eq:bFkpP}
       bF_{kp^i - 1} \equiv 1 \pmod{P}. 
    \end{equation}
    
    Since $p$ is an odd prime and $P \mid bF_{kp^i -1} - 1$ from \eqref{eq:bFkpP}, we know that $P \nmid bF_{kp^i -1} + 1$. 
    Note that due to $k = \ord_P(a/2)$, we have $(a/2)^{kp^i} \equiv 1 \pmod{P}$. 
    So, we have $P \nmid bF_{kp^i -1} + (a/2)^{kp^i}$.
    Hence, combining this with \eqref{eq:bFkp}, we obtain 
    $$
       bF_{kp^i  -1}\equiv (\frac{a}{2})^{kp^i}  \pmod{P^{e_i}},
    $$
    which, together with $F_{kp^i +1}=aF_{kp^i}+bF_{kp^i -1}\equiv bF_{kp^i - 1} \pmod{P^{e_i}}$, implies that 
    \begin{equation}  \label{eq:Fkpi+}
        F_{kp^i + 1} \equiv  (\frac{a}{2})^{kp^i}  \pmod{P^{e_i}}. 
    \end{equation}
    
    Recall that we have assumed $v_P((a/2)^k -1) = m$. Then, we have 
    $$
       v_P((a/2)^{kp^i} -1) = v_P(((a/2)^{k} -1)^{p^i}) = p^i v_P((a/2)^{k} -1) = mp^i.
    $$
    This means that 
    \begin{equation}  \label{eq:Pmpi}
        (\frac{a}{2})^{kp^i} \equiv  1  \pmod{P^{mp^i}} \quad \textrm{and} \quad  (\frac{a}{2})^{kp^i} \not\equiv  1  \pmod{P^{mp^i +1}}.
    \end{equation}

    Now, for any integer $i \ge 2$, if $mp^i \ge e_i$, from \eqref{eq:Fkpi+} and \eqref{eq:Pmpi} we have 
    $$
      F_{kp^i +1} \equiv 1 \pmod{P^{e_i}}. 
    $$
    Then, noticing $\pi(P^{e_i^{\prime}}) = kp^i$ and the choice of $e_i^{\prime}$, we have 
    $$
      e_i^{\prime} \geq e_i. 
    $$
    Combining this with Lemma~\ref{lem:eiei'}, we obtain 
    $$
      e_i^{\prime} = e_i. 
    $$
    So,  we get   
    \begin{equation}  \label{eq:mpi>}
       e_{i}^{\prime} = e_i = \frac{(p^i-1)e_1}{p-1}  \quad \textrm{when $mp^i \ge e_i$}.     
    \end{equation}
    
    In addition, for any integer $i \ge 2$, if $mp^i < e_i$, from \eqref{eq:Fkpi+} and \eqref{eq:Pmpi} we have 
    $$
        F_{kp^i +1} \equiv  1  \pmod{P^{mp^i}} \quad \textrm{and} \quad  F_{kp^i +1} \not\equiv  1  \pmod{P^{mp^i +1}}. 
    $$
    Then, noticing $\pi(P^{e_i^{\prime}}) = kp^i$ and the choice of $e_i^{\prime}$, we have 
     \begin{equation}  \label{eq:mpi<}
       e_{i}^{\prime} = mp^i \quad \textrm{when $mp^i < e_i$}.     
    \end{equation}
    Combining \eqref{eq:mpi>} with \eqref{eq:mpi<} and noticing $e_i = \frac{(p^i-1)e_1}{p-1}$, we get  
    $$
         e_{i}^{\prime} = \min\left\{ mp^i,   \frac{(p^i - 1)e_1}{p-1} \right\}.
    $$
    This completes the proof of Part (d).

    Part (e). 
    Similar as the beginning of the proof of Part (d), we can obtain $\dfrac{a^2}{b} \not\in \F_q$. 
    So, by Lemma~\ref{lem:ei} we know that for any integer $i \ge 1$, the integer $e_i$ exists.
    
    Note that in this case the characteristic of $\F_q$ is $p=2$. 
    Since $\Delta = a^2 + 4b = a^2 \pmod{P}$, we have $P \mid a$. 
    Noticing $F_2 = a$, by definition we have $\alpha(P)=2$ and $e_1 = v_P(a)$. 
    Besides, in this case we have assumed that $k=\ord_P(b)$ and $m = v_P(b^k -1)$. 
    
    Since $\alpha(P^{e_1}) = \alpha(P) = 2$ and 
    $$
       U^2 = bI + aU \equiv bI \pmod{P^{e_1}}, 
    $$
    by \eqref{eq:betas} we obtain 
    $$
       \pi(P) = \alpha(P) \ord_P(b) = 2\ord_P(b) = 2k
    $$
    and 
    $$
        \pi(P^{e_1}) = \alpha(P^{e_1}) \ord_{P^{e_1}}(b)    = 2\ord_{P^{e_1}}(b), 
    $$
    which, together with  $\pi(P) = \pi(P^{e_1})$ (due to $e_1 = e_1^{\prime}$), implies that $ \ord_{P^{e_1}}(b) = k$. 
    So, we have 
    $$
       v_P(b^k -1) \ge e_1,  \quad \textrm{that is,}  \quad m \ge e_1. 
    $$
    
    In the following, we complete the proof by considering two subcases: Case (e1) and Case (e2). 
    In Case~(e1), we assume $m > e_1$. In Case~(e2), we assume $m = e_1$. 
    
    Case (e1): $m > e_1$. 
    Since $U^2 = bI + aU$, it is easy to prove by induction that for any integer $i \ge 2$, we have 
    \begin{equation}   \label{eq:U2i}
    \begin{split}
       U^{2^{i}}  & = \sum_{r=0}^{i-1} a^{2^i -2^{r+1}}b^{2^r} I + a^{2^i - 1}U  \\
          & = (a^{2^i -2}b + a^{2^i - 2^2}b^2 +  \cdots + a^{2^{i-1}}b^{2^{i-2}} + b^{2^{i-1}})I + a^{2^i - 1}U. 
    \end{split}    
    \end{equation}
    In addition, since $\F_q[x]/P\F_q[x]$ is a finite field of chatracteristic 2, we know that $k = \ord_P(b)$ is an odd integer. 
    Then, noticing $v_P(a) = e_1$ and $m > e_1$,  from \eqref{eq:U2i} we obtain that
    \begin{equation*}
        U^{2^i k} \equiv b^{2^{i-1}k}I \equiv I \pmod{P^{2^{i-1}e_1}}, 
    \end{equation*}
    and 
    \begin{equation*}
    \begin{split}
        U^{2^i k} & \equiv (a^{2^{i-1}}b^{2^{i-2}} + b^{2^{i-1}})^k I \equiv (b^{2^{i-1}k} + ka^{2^{i-1}}b^{2^{i-1}(k-1)+2^{i-2}})I \\
        & \equiv (b^{2^{i-1}k} + a^{2^{i-1}}b^{2^{i-1}(k-1)+2^{i-2}})I \\
        & \equiv (1 + a^{2^{i-1}}b^{2^{i-1}(k-1)+2^{i-2}})I \not\equiv I \pmod{P^{2^{i-1}e_1 +1}}. 
    \end{split}
    \end{equation*}
    Then, noticing $\pi(P^{e_i^{\prime}}) = 2^{i-1}\pi(P) = 2^i k$ and the choice of $e_i^{\prime}$, we get that 
    $$
       e_i^{\prime} = 2^{i-1}e_1. 
    $$
    This completes the proof of Case (e1).

     Case (e2): $m = e_1$. Note that in this case, $v_P(a) = e_1$, $v_P(b^k - 1) = e_1$, and $e_i = (2^i -1)e_1$ for any $i \ge 2$. 
     Then,    from \eqref{eq:U2i} we deduce that for any $i \ge 2$, 
     \begin{equation*}
     \begin{split}
        U^{2^{i}k} & \equiv \Big(b^{2^{i-1}} + \sum_{r=0}^{i-2} a^{2^i -2^{r+1}}b^{2^r} \Big)^k I \\
        & \equiv \Big( b^{2^{i-1}k} + b^{2^{i-1}(k-1)} \sum_{r=0}^{i-2} a^{2^i -2^{r+1}}b^{2^r} \Big)I \\
        & \equiv I + \Big( (b^{k} -1)^{2^{i-1}} + b^{2^{i-1}(k-1)} \sum_{r=0}^{i-2} a^{2^i -2^{r+1}}b^{2^r} \Big)I \\
        & \equiv I + b^{-2^{i-1}}\Big( b^{2^{i-1}}(b^{k} -1)^{2^{i-1}} + b^{2^{i-1}k} \sum_{r=0}^{i-2} a^{2^i -2^{r+1}}b^{2^r} \Big)I \\
        & \equiv I + b^{-2^{i-1}}\Big( b^{2^{i-1}}(b^{k} -1)^{2^{i-1}} +  \sum_{r=0}^{i-2} a^{2^i -2^{r+1}}b^{2^r} \Big)I  \pmod{P^{e_i}}. 
     \end{split}
     \end{equation*}
   Hence, noticing $e_i^{\prime} \le e_i$, $\pi(P^{e_i^{\prime}}) = 2^i k$ and the choice of $e_i^{\prime}$, we obtain 
     \begin{equation*}
     \begin{split}
       e_{i}^{\prime} & = \min\left\{e_i, \quad v_P \big( b^{2^{i-1}}(b^k - 1)^{2^{i-1}} + \sum_{r=0}^{i-2}a^{2^i - 2^{r+1}}b^{2^r} \big)\right\} \\ 
                             & = \min\left\{(2^i -1)e_1, \quad v_P \big( b^{2^{i-1}-1}(b^k - 1)^{2^{i-1}} + \sum_{r=0}^{i-2}a^{2^i - 2^{r+1}}b^{2^r -1} \big)\right\}.         
     \end{split}
     \end{equation*}
     
    We claim that 
    \begin{equation}   \label{eq:ci}
        c_i = b^{2^{i-1}-1}(b^k - 1)^{2^{i-1}} + \sum_{r=0}^{i-2}a^{2^i - 2^{r+1}}b^{2^r -1},     
    \end{equation}
    and so 
    $$
       e_{i}^{\prime} = \min\{(2^i -1)e_1, \ v_P(c_i)\}. 
    $$
    Hence, it remains to prove \eqref{eq:ci}. 
    
   When $i = 2$, the right-hand side of \eqref{eq:ci} is $b(b^k - 1)^2 + a^2 = a^2 + bc_1^2$, which exactly equals to $c_2$. 
   Assume that \eqref{eq:ci} holds for $c_{i-1}$ for any $i \ge 3$.  
   By definition and by the induction hypothesis, we obtain
   \begin{align*}
       c_i & = a^{2^i - 2} + bc_{i-1}^2 \\
       & = a^{2^i - 2} + b\big(b^{2^{i-2}-1}(b^k - 1)^{2^{i-2}} + \sum_{r=0}^{i-3}a^{2^{i-1} - 2^{r+1}}b^{2^r -1}\big)^2 \\
       & = a^{2^i - 2} + b^{2^{i-1}-1}(b^k - 1)^{2^{i-1}} + \sum_{r=0}^{i-3}a^{2^{i} - 2^{r+2}}b^{2^{r+1} -1} \\     
       & = a^{2^i - 2} + b^{2^{i-1}-1}(b^k - 1)^{2^{i-1}} + \sum_{r=1}^{i-2}a^{2^{i} - 2^{r+1}}b^{2^r -1} \\      
       & = b^{2^{i-1}-1}(b^k - 1)^{2^{i-1}} + \sum_{r=0}^{i-2}a^{2^{i} - 2^{r+1}}b^{2^r -1},                       
   \end{align*}
   as claimed in \eqref{eq:ci}.
   This completes the proof. 
\end{proof}

In Part (e) of Theorem~\ref{thm:ei'} when $m = e_1$, one can see that the formula is not enough visible. 
We try to make it more clear in the following theorem. 

\begin{theorem}
\label{thm:ei'2}
    Assume that $a$ and $b$ are not both in $\F_q$. 
Assume further that $p=2$,  $\Delta \equiv 0 \pmod{P}$ and $e_1^{\prime} = e_1$, and $v_P(b^k -1) = e_1$, where $k=\ord_P(b)$. 
Let $g_1 = a/P^{e_1}$, $g_2 = (b^k -1)/P^{e_1}$, and $m_2 = v_P(g_1^2 + bg_2^2)$. 
Moreover, we define the integers $m_3, m_4, \ldots$ and the polynomials $g_3, g_4, \ldots$, by following the process (note that the process might be finite or infinite): 
\begin{itemize}
\item  if $m_2= e_1$, then define $g_3 = (g_1^2 + bg_2^2)/P^{e_1}$, and $m_3 = v_P(g_1^6 + bg_3^2)$;

\item for any $i \ge 3$, assume that we have defined the integers $m_3, \ldots, m_i$ and polynomials $g_3, \ldots, g_i$, 
if $m_i = e_1$, then define $g_{i+1} = (g_1^{2^i -2} + bg_i^2)/P^{e_1}$, and $m_{i+1} = v_P(g_1^{2^{i+1} -2} + bg_{i+1}^2)$.
\end{itemize}
Then, if the above integers $m_2, \ldots, m_j$ exist but $m_{j+1}$ does not exist for some integer $j \ge 2$, then we have that 
for any integer $i$ with $2 \le i \le j$, 
 \begin{equation*}
     e_i^{\prime} = 
            \begin{cases}
                 (2^i - 1)e_1 & \text{if $i < j$}, \\
                \min\{(2^j - 1)e_1, \ (2^j -2)e_1 + m_j\}  & \text{if $i=j$}, 
            \end{cases}
 \end{equation*}
and for any $i > j$, 
$$
  e_i^{\prime} = 2^{i-j} e_j^{\prime}.
$$
 \end{theorem}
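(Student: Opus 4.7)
The plan is to apply the formula $e_i^{\prime} = \min\{(2^i - 1)e_1,\ v_P(c_i)\}$ from Part~(e) of Theorem~\ref{thm:ei'} (in the subcase $m = e_1$), and to express $v_P(c_i)$ explicitly in terms of the data $g_i$ and $m_i$. The starting observation is the identity $a = g_1 P^{e_1}$ and $c_1 = b^k - 1 = g_2 P^{e_1}$, from which one computes $c_2 = a^2 + bc_1^2 = P^{2e_1}(g_1^2 + bg_2^2)$.

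First I would prove by induction on $i$, for $2 \le i \le j$, the identity
$$c_i = P^{(2^i - 2)e_1}\bigl(g_1^{2^i - 2} + b g_i^2\bigr), \qquad v_P(c_i) = (2^i - 2)e_1 + m_i.$$
The inductive step uses the fact that if $m_i$ is defined for $i \le j$, then $m_{i-1} = e_1$, so by definition $g_i = (g_1^{2^{i-1}-2} + bg_{i-1}^2)/P^{e_1}$; combined with the inductive hypothesis this gives $c_{i-1} = P^{(2^{i-1} - 1)e_1} g_i$, and substituting into the recursion $c_i = a^{2^i - 2} + bc_{i-1}^2$ yields the formula. The values of $e_i^{\prime}$ for $2 \le i \le j$ then follow: for $i < j$ the definition of $j$ forces $m_i = e_1$, giving $v_P(c_i) = (2^i - 1)e_1$ and hence $e_i^{\prime} = (2^i - 1)e_1$; for $i = j$, the non-existence of $m_{j+1}$ means $m_j \ne e_1$, and the minimum formula yields $e_j^{\prime} = \min\{(2^j - 1)e_1,\ (2^j - 2)e_1 + m_j\}$.

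For $i > j$ the $g_i$-recursion has terminated, so I would instead track $\nu_i := v_P(c_i)$ directly through $c_i = a^{2^i - 2} + bc_{i-1}^2$. Splitting into cases: if $m_j > e_1$ (including $m_j = \infty$, i.e.\ $h_j = 0$), then $\nu_j > (2^j - 1)e_1$, and induction on $r \ge 1$ gives $\nu_{j+r} = (2^{j+r} - 2^r)e_1$, whence $e_{j+r}^{\prime} = (2^{j+r} - 2^r)e_1 = 2^r(2^j - 1)e_1 = 2^r e_j^{\prime}$. If instead $m_j < e_1$, then $\nu_j = (2^j - 2)e_1 + m_j$ and induction gives $\nu_{j+r} = 2^r \nu_j$, and one verifies $(2^{j+r} - 1)e_1 > 2^r \nu_j$, so again $e_{j+r}^{\prime} = 2^r \nu_j = 2^r e_j^{\prime}$.

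The main obstacle lies in Step~3: in characteristic~$2$, a sum of two terms of equal $P$-adic valuation can have \emph{strictly larger} valuation, so I must check at every stage that the two summands $a^{2^{j+r} - 2}$ and $bc_{j+r-1}^2$ have distinct valuations, ruling out cancellation. This reduces to verifying the strict inequalities $(2^r - 2)e_1 > 0$ for $r \ge 2$ in the first subcase, and $(2^{r+1} - 2)e_1 - 2^r m_j > 0$ for $r \ge 1$ in the second (which uses $m_j \le e_1 - 1$, giving $(2^{r+1} - 2)e_1 - 2^r m_j \ge (2^r - 2)e_1 + 2^r > 0$). Once these strict inequalities are in hand, the minimum-rule for valuations applies at every step, and the induction goes through cleanly.
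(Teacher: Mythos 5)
Your proposal is correct, and for the range $2 \le i \le j$ it coincides with the paper's proof: both start from the formula $e_i' = \min\{(2^i-1)e_1,\ v_P(c_i)\}$ of Part~(e) of Theorem~\ref{thm:ei'}, prove by induction the identity $c_i = P^{(2^i-2)e_1}(g_1^{2^i-2}+bg_i^2)$ via the relation $c_{i-1} = P^{(2^{i-1}-1)e_1}g_i$, and then read off $e_i'$ from $m_i=e_1$ for $i<j$ and from $m_j\ne e_1$ for $i=j$. The divergence is in the tail $i>j$: the paper carries out exactly one more valuation step to show $e_{j+1}'=2e_j'$ in both subcases $m_j>e_1$ and $m_j<e_1$, and then invokes Lemma~\ref{lem:ei'p} (a matrix eigenvalue argument showing that once a doubling step $e_{i+1}'=pe_i'$ occurs, all subsequent steps double) to conclude $e_i'=2^{i-j}e_j'$. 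You instead bypass Lemma~\ref{lem:ei'p} and continue the induction on $\nu_i=v_P(c_i)$ through the recursion $c_i=a^{2^i-2}+bc_{i-1}^2$, obtaining $\nu_{j+r}=(2^{j+r}-2^r)e_1$ or $\nu_{j+r}=2^r\nu_j$ according to the subcase. You correctly identify the one point where this could fail in characteristic $2$ --- possible cancellation when the two summands have equal valuation --- and your inequalities $(2^r-2)e_1>0$ for $r\ge 2$ and $(2^{r+1}-2)e_1-2^rm_j\ge(2^r-2)e_1+2^r>0$ do rule it out, so the ultrametric equality holds at every step. Your route is slightly longer but more self-contained (no appeal to the Jordan-form lemma), and it yields the explicit valuations $v_P(c_i)$ for all $i$ as a by-product; the paper's route localizes all the valuation bookkeeping to a single step at $i=j+1$ and lets the structural Lemma~\ref{lem:ei'p} do the rest.
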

\begin{proof}
Note that since the integers $m_2, \ldots, m_j$ exist but $m_{j+1}$ does not exist, we know that the above process stops after computing $m_j$. 
Since the integer $m_{j+1}$ does not exist, by definition we must have $m_j \ne e_1$. 

By Part (e) of Theorem~\ref{thm:ei'}, for any $i \ge 2$ we have 
\begin{equation}  \label{eq:ei'}
      e_{i}^{\prime} = \min\{(2^i -1)e_1, \ v_P(c_i)\}, 
\end{equation}
where $c_i = a^{2^i -2} + bc_{i-1}^2$ and $c_1 = b^k - 1$. 

We claim that for any $i \ge 2$, if the integer $m_i$ is well-defined, then we have 
\begin{equation}  \label{eq:ci=}
   c_i = P^{(2^i - 2)e_1} (g_1^{2^i - 2} + bg_i^2). 
\end{equation}
Indeed, for $i=2$, by definition 
$$
c_2 = a^2 + bc_1 = a^2 + b (b^k -1) = P^{2e_1}(g_1^2 + bg_2^2), 
$$
as claimed in \eqref{eq:ci=}. 
Assume that \eqref{eq:ci=} holds for $c_{i-1}$ for any $i \ge 3$ and the integer $m_i$ is well-defined.  
Then, by definition we have $m_{i-1} = e_1$, and 
\begin{equation*}
g_i = (g_1^{2^{i-1}-2} + bg_{i-1}^2)/P^{e_1}, 
\end{equation*}
which, together with the induction hypothesis, gives 
\begin{equation}  \label{eq:ci-1}
  c_{i-1} = P^{(2^{i-1} - 2)e_1} (g_1^{2^{i-1} - 2} + bg_{i-1}^2) = P^{(2^{i-1} - 1)e_1} g_i. 
\end{equation}
By definition and by \eqref{eq:ci-1}, we obtain 
\begin{equation*}
  c_i = a^{2^i -2} + bc_{i-1}^2 = a^{2^i -2} + bP^{(2^i - 2)e_1}g_i^2 = P^{(2^i - 2)e_1} (g_1^{2^i - 2} + bg_i^2). 
\end{equation*}
This completes the proof of \eqref{eq:ci=}. 

Hence, combining \eqref{eq:ei'} with \eqref{eq:ci=}, we get that for any $i \ge 2$, if the integer $m_i$ is well-defined, then 
\begin{equation}  \label{eq:ei'=}
      e_{i}^{\prime} = \min\{(2^i -1)e_1, \ (2^i - 2)e_1 + v_P(g_1^{2^i - 2} + bg_i^2)\}.
\end{equation}

Now, for any $i \ge 2$, if $i < j$ (this implies $j \ge 3$), since the integer $m_j$ is well-defined, 
we have $m_i = e_1$, that is, $v_P(g_1^{2^i - 2} + bg_i^2) = e_1$, which together with \eqref{eq:ei'=} gives 
\begin{equation}  \label{eq:ei'i<j}
      e_{i}^{\prime} = \min\{(2^i -1)e_1, \ (2^i - 2)e_1 + e_1\} = (2^i -1)e_1.
\end{equation}

Since $m_j = v_P(g_1^{2^j - 2} + bg_j^2)$, by \eqref{eq:ei'=} we have 
\begin{equation}  \label{eq:ei'i=j}
      e_{j}^{\prime} = \min\{(2^j -1)e_1, \ (2^j - 2)e_1 + m_j)\}.
\end{equation}

In addition, using \eqref{eq:ei'} and \eqref{eq:ci=} and noticing $m_j = v_P(g_1^{2^j - 2} + bg_j^2) \ne e_1$, we deduce that 
\begin{equation}  \label{eq:ej+1'}
\begin{split}
  e_{j+1}^{\prime} & = \min\{(2^{j+1}-1)e_1, \ v_P(c_{j+1})\} \\
  & = \min\{(2^{j+1}-1)e_1, \ v_P(a^{2^{j+1}-2} + bc_j^2)\} \\
  & = \min\{(2^{j+1}-1)e_1, \ v_P(a^{2^{j+1}-2} + bP^{(2^{j+1}-4)e_1}(g_1^{2^j -2} + bg_j^2)^2)\} \\  
  & = \min\{(2^{j+1}-1)e_1, \ \min\{(2^{j+1}-2)e_1, \ (2^{j+1}-4)e_1 + 2m_j\}\}. 
\end{split}
\end{equation}

Now, if  $m_j > e_1$, then by \eqref{eq:ei'i=j}, we have 
\begin{equation*}
e_j^{\prime} = (2^j -1)e_1, 
\end{equation*}  
and by \eqref{eq:ej+1'}, we have 
\begin{equation*}
e_{j+1}^{\prime}  = (2^{j+1}-2)e_1 = 2e_j^{\prime}, 
\end{equation*}  
and so, using Lemma~\ref{lem:ei'p} repeatedly we obtain that  for any $i > j$, 
\begin{equation*}
     e_i^{\prime} = 2 e_{i-1}^{\prime}.
\end{equation*}
If $m_j < e_1$, then similarly we obtain 
\begin{equation*}
e_j^{\prime} = (2^j - 2)e_1 + m_j,  
\end{equation*}  
and 
\begin{equation*}
e_{j+1}^{\prime}  = (2^{j+1}-4)e_1 + 2m_j = 2e_j^{\prime}, 
\end{equation*}  
and thus, using Lemma~\ref{lem:ei'p} repeatedly we get that  for any $i > j$, 
\begin{equation*}
  e_i^{\prime} = 2 e_{i-1}^{\prime}.
\end{equation*}

Therefore, we always have that for any $i > j$, 
\begin{equation*}
  e_i^{\prime} = 2 e_{i-1}^{\prime}, 
\end{equation*}
which yields directly 
\begin{equation}  \label{eq:ei'i>j}
  e_i^{\prime} = 2^{i-j} e_{j}^{\prime}.
\end{equation}

Finally, \eqref{eq:ei'i<j}, \eqref{eq:ei'i=j} and \eqref{eq:ei'i>j} give the desired result. 
\end{proof}

We present two examples for Theorem~\ref{thm:ei'2} to suggest that the process 
in the theorem can be finite (Example~\ref{exam1}) or infinite (Example~\ref{exam2}). 

\begin{example} 
\label{exam1}
Let $p=2, a =x^5+x^3+x, b=x^2+1, P=x^2 + x +1$. Then, we have $\Delta \equiv 0 \pmod{P}$, 
$k=3$,  $v_P(b^3-1)=2$, $m_2 = 2$, $m_3 = 0$, $e_1 =e_1^{\prime} = 2$, 
and for any integer $i \ge 2$, $e_i = 2^{i+1} -2$ and $e_i^{\prime} = 2^{i-1} \cdot 3$. 
\end{example}

\begin{example}
\label{exam2}
Let $p=2, a =x^{12} + x^9 + x^8 +x^7 + x^6 +x^5 + x^4 + x, b=x^3 + x, P=x^4 + x^3 + 1$. 
Then, we have $\Delta \equiv 0 \pmod{P}$, $k=3$,  $v_P(b^3-1)=1$, $e_1 =e_1^{\prime} = 1$,  
and for any integer $i \ge 2$, $m_i = 1$, $e_i = 2^i -1$ and $e_i^{\prime} = 2^i - 1$. 
\end{example}

Now, we first compute $e_1$ and $e_1^{\prime}$ by direct computations, and then using Theorems~\ref{thm:ei'} and \ref{thm:ei'2}, we compute the integers $e_i^{\prime}, i \ge 2$. 
Then, noticing the choices of the integers $e_i^{\prime}$,  
we directly obtain the following result for computing the value $\pi(P^e)$. 

\begin{theorem}  \label{thm:pi}
   Assume that $a$ and $b$ are not both in $\F_q$. 
Put $e_0^{\prime} = 0$. 
Then, for any integer $e \ge 1$, we have that  $e_{i-1}^{\prime} < e \le e_i^{\prime}$ for some integer $i \ge 1$, and moreover, 
$\pi(P^e) = \pi(P^{e_i^{\prime}}) = p^{i-1}\pi(P)$.
\end{theorem}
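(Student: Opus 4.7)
The plan is to derive Theorem~\ref{thm:pi} as a direct consequence of the definition of the jump indices $e_i^{\prime}$, together with the monotonicity statement in Corollary~\ref{cor:M1M2} and the jump relation from Lemma~\ref{lem:Pj}(b). The hard analytic work (actually computing the values of $e_i^{\prime}$) has already been carried out in Theorems~\ref{thm:ei'} and~\ref{thm:ei'2}, so what remains here is essentially a bookkeeping argument.

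First, since $a$ and $b$ are not both in $\F_q$, Lemma~\ref{lem:ei'} guarantees that $e_i^{\prime}$ exists for every $i \ge 1$. By the very definition of $e_i^{\prime}$ as the $i$-th positive integer where $\pi(P^e) \ne \pi(P^{e+1})$, the sequence $0 = e_0^{\prime} < e_1^{\prime} < e_2^{\prime} < \cdots$ is strictly increasing and unbounded, so every positive integer $e$ lies in a unique interval $(e_{i-1}^{\prime}, e_i^{\prime}]$; this justifies the first assertion of the theorem.

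Second, I would argue that $\pi(P^e)$ is constant on each such interval. By Corollary~\ref{cor:M1M2}, $\pi(P^e) \mid \pi(P^{e+1})$ for every $e \ge 1$. For any integer $e$ with $e_{i-1}^{\prime} < e < e_i^{\prime}$, the choice of $e_i^{\prime}$ ensures that $\pi(P^e) = \pi(P^{e+1})$ (no jump occurs strictly between consecutive $e_j^{\prime}$'s). Iterating from the endpoint, we conclude
\[
\pi(P^{e_{i-1}^{\prime}+1}) = \pi(P^{e_{i-1}^{\prime}+2}) = \cdots = \pi(P^{e_i^{\prime}}),
\]
which gives $\pi(P^e) = \pi(P^{e_i^{\prime}})$ for every $e$ in $(e_{i-1}^{\prime}, e_i^{\prime}]$.

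Third, I would compute $\pi(P^{e_i^{\prime}})$ in closed form by induction on $i$. For the base case $i=1$, the constancy argument applied to the interval $(0, e_1^{\prime}]$ gives $\pi(P^{e_1^{\prime}}) = \pi(P^1) = \pi(P)$. For the inductive step, at the jump index $e_i^{\prime}$ we have $\pi(P^{e_i^{\prime}+1}) \ne \pi(P^{e_i^{\prime}})$, and Lemma~\ref{lem:Pj}(b) then forces $\pi(P^{e_i^{\prime}+1}) = p\,\pi(P^{e_i^{\prime}})$. Combining with the constancy on the next interval, $\pi(P^{e_{i+1}^{\prime}}) = \pi(P^{e_i^{\prime}+1}) = p\,\pi(P^{e_i^{\prime}})$, so inductively $\pi(P^{e_i^{\prime}}) = p^{i-1}\pi(P)$.

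Putting these three steps together yields $\pi(P^e) = \pi(P^{e_i^{\prime}}) = p^{i-1}\pi(P)$ for every $e$ in $(e_{i-1}^{\prime}, e_i^{\prime}]$, which is exactly the statement of the theorem. There is no real obstacle to overcome: the only subtlety is to be careful about the role of the endpoint $e_i^{\prime}$ itself (it belongs to the interval $(e_{i-1}^{\prime}, e_i^{\prime}]$, and the jump to $p^{i}\pi(P)$ happens only at $e_i^{\prime} + 1$), but this is handled transparently by the definition of $e_i^{\prime}$.
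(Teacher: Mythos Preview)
Your proposal is correct and matches the paper's approach: the paper states Theorem~\ref{thm:pi} without a formal proof, simply remarking that ``noticing the choices of the integers $e_i^{\prime}$, we directly obtain the following result,'' and your argument is precisely the natural unpacking of that remark via Lemma~\ref{lem:ei'}, Corollary~\ref{cor:M1M2}, and Lemma~\ref{lem:Pj}(b).
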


\section{Computing $\alpha(P)$ and $\pi(P)$}
\label{sec:P}

Using Theorems~\ref{thm:alpha} and \ref{thm:pi}, we essentially reduce the problem of computing $\alpha(P^e)$ and $\pi(P^e)$ to that 
 of computing $\alpha(P)$ and $\pi(P)$. 
 
 Since $P$ is an irreducible polynomial, the sequence $\{F_n\}$ modulo $P$ is in fact a sequence over the finite field $\F_{q^d}$, where $d = \deg(P)$. 
Recall that $f(X)=X^2 - aX - b$ is the characteristic polynomial of the sequence $\{F_n\}$.  
Let $\lambda_1, \lambda_2$ be the two roots of $f$ mod $P$, and we denote their orders by $\ord_P(\lambda_1), \ord_P(\lambda_2)$ respectively. 
By  \cite[Theorem 8.44]{LN}, we know that 
\begin{equation}  \label{eq:piP}
   \pi(P) = \lcm[\ord_P(\lambda_1), \ord_P(\lambda_2)]. 
\end{equation}
Since $\deg(f) = 2$ and $\gcd(b, P)=1$, we have that $\lambda_1, \lambda_2$ are both non-zero and in $\F_{q^{2d}}$. 
So, by \eqref{eq:piP} and noticing $\alpha(P) \mid \pi(P)$,  we have 
$$
  \pi(P) \mid q^{2d} - 1 \quad \textrm{and} \quad \alpha(P) \mid q^{2d} - 1.
$$ 
The following theorem improves the above divisibility property, which is an analogue of a result in the integer case (see \cite[Theorem 3]{Ren}).

Recall that $p$ is the characteristic of $\F_q$, and $\Delta = a^2 + 4b$. 

\begin{theorem} \label{the:piP1}
   Let $p$ be an odd prime, and  let $d=\deg(P)$.  Then, we have: 
\begin{itemize}
    \item [(a)] if $\Delta$ is a non-zero quadratic residue modulo $P$, then $\alpha(P) \mid q^d-1$ and $ \pi(P) \mid q^d-1$; 
    
    \item [(b)] if $\Delta$ is a quadratic non-residue modulo $P$, then $\alpha(P) \mid q^d+1$ and $ \pi(P) \mid (q^d+1)\ord_P(-b)$; 

    \item [(c)] if $\Delta \equiv 0 \pmod{P}$, then $\alpha(P)=p$ and $\pi(P)=p \cdot \ord_P(2^{-1}a) $.
\end{itemize}
\end{theorem}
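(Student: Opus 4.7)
The plan is to reduce everything to the key identity \eqref{eq:piP}, namely $\pi(P) = \lcm[\ord_P(\lambda_1), \ord_P(\lambda_2)]$, by analysing the two roots of $f(X) = X^2 - aX - b$ modulo $P$ in the appropriate extension of the residue field. Since $P$ is irreducible of degree $d$, the residue ring is the finite field $\F_{q^d}$, and because $p$ is odd the quadratic formula writes the roots as $\lambda_{1,2} = (a \pm \sqrt{\Delta})/2$. Whether $\sqrt{\Delta}$ already lies in $\F_{q^d}$ or only in $\F_{q^{2d}}$ is governed by exactly the trichotomy in the statement, so the three parts will be handled in turn.

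For Part (a), the assumption that $\Delta$ is a nonzero quadratic residue modulo $P$ puts $\sqrt{\Delta}$, and hence both $\lambda_1, \lambda_2$, into $\F_{q^d}^\ast$. Every element of this group has order dividing $q^d - 1$, so \eqref{eq:piP} gives $\pi(P) \mid q^d - 1$; together with $\alpha(P) \mid \pi(P)$ (which follows from \eqref{eq:Uns} and \eqref{eq:UnI}), this yields both divisibilities at once.

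For Part (b), the non-residue condition means $\lambda_1, \lambda_2 \in \F_{q^{2d}} \setminus \F_{q^d}$, and the Galois action of $\Gal(\F_{q^{2d}}/\F_{q^d})$, generated by the Frobenius $\sigma: x \mapsto x^{q^d}$, swaps them; thus $\lambda_2 = \lambda_1^{q^d}$. I would first observe that $F_n \equiv 0 \pmod P$ is equivalent to $(\lambda_1/\lambda_2)^n = 1$, so $\alpha(P) = \ord(\lambda_1/\lambda_2)$, and then compute
\[
(\lambda_1/\lambda_2)^{q^d+1} = (\lambda_1/\lambda_2) \cdot \sigma(\lambda_1/\lambda_2) = (\lambda_1/\lambda_2)(\lambda_2/\lambda_1) = 1,
\]
which yields $\alpha(P) \mid q^d + 1$. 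Because $\sigma$ preserves multiplicative orders, $\ord(\lambda_1) = \ord(\lambda_2)$, so by \eqref{eq:piP} the period equals this common value. The remaining step is to notice $\lambda_1^{q^d + 1} = \lambda_1 \cdot \sigma(\lambda_1) = \lambda_1 \lambda_2 = -b$, so that raising to the power $\ord_P(-b)$ gives $\lambda_1^{(q^d + 1)\ord_P(-b)} = 1$, hence $\pi(P) \mid (q^d+1)\ord_P(-b)$. This case is the main obstacle because it requires combining the Frobenius swap, the equality of orders it induces, and the norm identity $\lambda_1 \lambda_2 = -b$ in just the right way.

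For Part (c), the congruence $\Delta \equiv 0 \pmod P$ and the oddness of $p$ give the repeated root $\lambda = a/2 \pmod P$; from $P \mid \Delta$ and $\gcd(b, P) = 1$ one deduces $P \nmid a$, so $a/2$ is a unit in $\F_{q^d}$. Lemma~\ref{lem:Fn} then says $F_n \equiv n(a/2)^{n-1} \pmod P$, which immediately forces $\alpha(P) = p$. Writing a general zero index as $n = mp$ and using $F_{mp+1} \equiv (mp+1)(a/2)^{mp} \equiv (a/2)^{mp}$, the period condition $F_{mp+1} \equiv 1$ becomes $\ord_P(a/2) \mid mp$; since $\ord_P(a/2) \mid q^d - 1$ is coprime to $p$, the smallest admissible $m$ equals $\ord_P(2^{-1}a)$, giving $\pi(P) = p \cdot \ord_P(2^{-1}a)$ as claimed.
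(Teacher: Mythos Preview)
Your proof is correct, and the underlying algebraic content (Frobenius conjugacy of the roots in the non-residue case, the norm identity $\lambda_1\lambda_2=-b$, and the repeated-root formula $F_n\equiv n(a/2)^{n-1}$) coincides with the paper's. The packaging, however, is different. The paper argues uniformly through the matrix $U$: it diagonalises $U$ over $\F_{q^d}$ in Part~(a), observes $U^{q^d+1}\equiv -bI\pmod P$ in Part~(b), and puts $U$ into Jordan form in Part~(c), each time reading off $\alpha(P)$ and $\pi(P)$ from the criteria \eqref{eq:UnI} and \eqref{eq:Uns}. You instead lean on the scalar identity \eqref{eq:piP} and the Binet-type formula for $F_n\bmod P$, avoiding matrices entirely. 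Your route is a bit shorter because \eqref{eq:piP} has already done the work of translating between $\pi(P)$ and the orders of the roots; the paper's matrix argument is more self-contained (it does not invoke \cite[Theorem~8.44]{LN}) and keeps the whole paper's machinery uniform. One small point of care: when you cite Lemma~\ref{lem:Fn} for $F_n\equiv n(a/2)^{n-1}\pmod P$, note that the lemma is stated over $\overline{\F_q(x)}$ for $\Delta=0$, not for $\Delta\equiv 0\pmod P$; you are really reapplying its proof over the field $\F_q[x]/(P)$, which the paper makes explicit elsewhere.
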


\begin{proof}
    Part (a). 
    Since $\Delta$ is a non-zero quadratic residue modulo $P$ and $\gcd(b, P)=1$, we know that 
    $f$ has two distinct non-zero roots modulo $P$, say $\lambda_1$ and $\lambda_2$, in $\F_{q^d}$. 
    Then, there exists an invertible matrix $T$ such that $U  \equiv TDT^{-1} \pmod{P}$
    where $D =
    \begin{bmatrix}
        \lambda_1&0\\
        0&\lambda_2
    \end{bmatrix}$.
    Thus, 
    $$
    U^{q^d-1}  \equiv TD^{q^d-1}T^{-1} \equiv T
    \begin{bmatrix}
        \lambda_1^{q^d-1}&0\\
        0&\lambda_2^{q^d-1}
    \end{bmatrix}
    T^{-1} \equiv I  \pmod{P},
    $$
    which by definition implies that 
    $$\alpha(P) \mid q^d-1 \quad \textrm{and} \quad  \pi(P) \mid q^d-1.$$

    Part (b).
     Since $\Delta$ is a quadratic non-residue modulo $P$, 
    we have that the two distinct roots of $f$ modulo $P$ are in $\F_{q^{2d}}$ but not in $\F_{q^d}$. 
    Let $\gamma$ be one of the two roots. Then, the other root is $\gamma^{q^d}$. 
    Since $\gamma$ and $\gamma^{q^d}$ are the two roots of the equation $X^2-aX-b=0$, we have 
    $$
      \gamma \gamma^{q^d} =  \gamma^{q^d+1} \equiv -b \pmod{P}, 
    $$
    and also
    $$
       \gamma^{q^d q^{d+1}}=(\gamma^{q^{d+1}})^{q^d} \equiv (-b)^{q^d} \equiv -b \pmod{P}.
    $$
    Thus, for some invertible matrix $T$, 
    $$U^{q^d+1} \equiv T
    \begin{bmatrix}
        \gamma^{q^d+1}&0\\
        0&\gamma^{q^d(q^d+1)}
    \end{bmatrix}
    T^{-1} \equiv T
    \begin{bmatrix}
        -b&0\\
        0&-b
    \end{bmatrix}
    T^{-1} \equiv -bI \pmod{P} ,
    $$
    which implies 
    $$(U^{q^d+1})^{\ord_P(-b)} \equiv I \pmod{P}.$$
    Hence, we obtain 
    $$
       \alpha(P) \mid q^d+1 \quad \textrm{and} \quad  \pi(P) \mid (q^d+1)\ord_P(-b).
    $$

    Part (c).
     Since $\Delta \equiv 0 \pmod{P}$ and $p$ is odd, 
    we get that $f$ modulo $P$ has a repeated root (that is, $2^{-1}a  \pmod{P}$). 
    Then, for some invertible matrix $T$, we have   $U=TJT^{-1}$, 
    where 
    $J =
    \begin{bmatrix}
        2^{-1}a &1\\
        0 & 2^{-1}a
    \end{bmatrix}$. 
    
    For any integer $n \ge 1$, we have 
    $J^n =
    \begin{bmatrix}
        (2^{-1}a)^n & n(2^{-1}a)^{n-1}\\
        0 & (2^{-1}a)^n
    \end{bmatrix}.
    $
    Then, when $n=p$, we have $U^p \equiv (2^{-1}a)^{p} I \pmod{P}$. 
    So, we obtain $\alpha(P)=p$.

    In addition, for any integer $n \ge 1$, we have 
    $U^{pn} \equiv (2^{-1}a)^{pn} I \pmod{P}$.
    So, we get that $U^{pn} \equiv I \pmod{P}$ if and only if $\ord_P(2^{-1}a) \mid n$. 
    Hence, we have 
    $$\pi(P)=p\cdot \ord_P(2^{-1}a).$$
\end{proof}

Now, we consider the case when $p=2$. 
In this case, $f(X) = X^2 + aX + b$, and $\Delta = a^2 + 4b = a^2$. 
$f$ has two distinct roots modulo $P$ if and only if $\Delta \not\equiv 0 \pmod{P}$, that is, $a \not\equiv 0 \pmod{P}$. 
To determine whether the roots are in $\F_{q^d}$, the situation is quite different from the case of odd characterisitc. 

Assume $a \not\equiv 0 \pmod{P}$. 
Define the polynomial $g(X) = X^2 + X + b/a^2$. 
Note that $g(X) = f(aX)/a^2$. 
So, the roots of $f$ modulo $P$ are in $\F_{q^d}$ if and only if the roots of $g$ modulo $P$ are in $\F_{q^d}$. 
Recall that $q=p^{l}$. So, $q^d = p^{dl} = 2^{dl}$. 
Define the trace function $\Tr$ of $\F_{q^d}$: 
$$
\Tr(x)=\sum_{i=0}^{dl-1}x^{2^i}, \quad x \in \F_{q^d}. 
$$
Then, by \cite[Theorem 1]{BRS} we know that 
the roots of $f$ modulo $P$ are in $\F_{q^d}$ if and only if $\Tr(b/a^2) \equiv 0 \pmod{P}$.  
Moreover, one can write down the solutions by using the formulas given in \cite{Chen}. 

Now, we are ready to establish the divisibility result for the case when $p=2$. 

\begin{theorem}
Let $p=2$, and  let $d=\deg(P)$. Then, we have: 
\begin{itemize}
    \item [(a)] if $a \not\equiv 0 \pmod{P}$ and $\Tr(b/a^2) \equiv 0 \pmod{P}$ , then $\alpha(P) \mid q^d-1$ and $ \pi(P) \mid q^d-1$; 
    
    \item [(b)] if $a \not\equiv 0 \pmod{P}$ and $\Tr(b/a^2) \not\equiv 0 \pmod{P}$ , then $\alpha(P) \mid q^d+1$ and $ \pi(P) \mid (q^d+1)\ord_P(b)$; 

    \item [(c)] if $a \equiv 0 \pmod{P}$, then $\alpha(P)=2$ and $\pi=2 \ord_P(b)$. 
\end{itemize}

\end{theorem}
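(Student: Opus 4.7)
The plan is to mirror the proof of Theorem~\ref{the:piP1} for odd characteristic, substituting the trace criterion stated just before the theorem (\cite[Theorem 1]{BRS}) in place of the quadratic residue criterion, and carefully using the fact that in characteristic $2$ we have $-b = b$ and the Frobenius $x \mapsto x^2$ is a bijection on $\F_{q^d}$. In each case I will identify the eigenvalues of $U$ modulo $P$ in an appropriate field, diagonalize or otherwise reduce $U$, and then read off the orders of $U$ (which give $\pi(P)$) and of the smallest scalar power of $U$ (which gives $\alpha(P)$) from \eqref{eq:UnI} and \eqref{eq:Uns}.

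For Part (a), since $a \not\equiv 0 \pmod P$ we have $\Delta = a^2 \not\equiv 0 \pmod P$, so $f$ has two distinct roots modulo $P$. The condition $\Tr(b/a^2) \equiv 0 \pmod P$ ensures, by the trace criterion, that both roots $\lambda_1, \lambda_2$ lie in $\F_{q^d}^\ast$ (nonzero because $\gcd(b,P)=1$). Then $U$ is diagonalizable modulo $P$ over $\F_{q^d}$, and $\lambda_i^{q^d - 1} \equiv 1 \pmod P$ yields $U^{q^d - 1} \equiv I \pmod P$, so both $\alpha(P)$ and $\pi(P)$ divide $q^d - 1$.

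For Part (b), the trace criterion gives that the two distinct roots of $f$ modulo $P$ lie in $\F_{q^{2d}}\setminus \F_{q^d}$, so they are $\gal(\F_{q^{2d}}/\F_{q^d})$-conjugate: if $\gamma$ is one root, the other is $\gamma^{q^d}$. From $\gamma\cdot \gamma^{q^d} \equiv -b \equiv b \pmod P$ we get $\gamma^{q^d + 1} \equiv b \pmod P$, and the same holds for the other root. Diagonalizing $U$ over $\F_{q^{2d}}$ shows $U^{q^d+1} \equiv bI \pmod P$, a scalar matrix; by \eqref{eq:Uns} this gives $\alpha(P) \mid q^d + 1$, and raising to the power $\ord_P(b)$ then gives $U^{(q^d+1)\ord_P(b)} \equiv I \pmod P$, so $\pi(P) \mid (q^d+1)\ord_P(b)$ by \eqref{eq:UnI}.

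For Part (c), I will work directly with the matrix. When $a \equiv 0 \pmod P$, a one-line computation gives
\[
U^2 = bI + aU \equiv bI \pmod P,
\]
so $U^2$ is a scalar matrix and $\alpha(P) \mid 2$; it cannot be $1$ because $F_1 = 1 \not\equiv 0 \pmod P$, so $\alpha(P) = 2$. Then \eqref{eq:betas} gives $\pi(P) = \alpha(P)\ord_P(b) = 2\ord_P(b)$. The only mildly delicate step is Part (b), where I must be attentive that characteristic~$2$ collapses $-b$ to $b$ and that the trace criterion is the correct replacement for quadratic (non)residuacity; the remainder is a direct transcription of the odd-characteristic argument.
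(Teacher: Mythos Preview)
Your proposal is correct and follows essentially the same route as the paper: parts (a) and (b) are literally the paper's argument (the paper simply says ``same arguments as in proving Parts (a) and (b) of Theorem~\ref{the:piP1}'' after invoking the trace criterion), and for part (c) the paper also computes $U^2\equiv bI\pmod P$ to get $\alpha(P)=2$. The only cosmetic difference is that the paper then writes out $U^n$ explicitly for even and odd $n$ to read off $\pi(P)=2\ord_P(b)$, whereas you invoke \eqref{eq:betas}; both are immediate.
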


\begin{proof}
    Parts (a) and (b). 
    We have known that the roots of $f$ modulo $P$ are in $\F_{q^d}$ if and only if $\Tr(b/a^2) \equiv 0 \pmod{P}$.  
    Then, the desired result follows from the same arguments as in proving Parts (a) and (b) of Theorem~\ref{the:piP1}. 
    
    Part (c). 
    Since $a \equiv 0 \pmod{P}$, 
    we get $U \equiv 
    \begin{bmatrix}
        0&1\\
        b&0
    \end{bmatrix} \pmod{P}$.

    Observing $U^2\equiv
    \begin{bmatrix}
        b&0\\
        0&b
    \end{bmatrix} \equiv bI \pmod{P}$, we have $\alpha(P) = 2$, and also 
    \begin{equation*}
        U^n \equiv \left\{
        \begin{aligned}
            &\begin{bmatrix}
                b^{n/2}&0\\
                0&b^{n/2}
            \end{bmatrix} , & n \text{ is even}, \\
            &\begin{bmatrix}
                0&b^{(n-1)/2}\\
                b^{(n+1)/2}&0
            \end{bmatrix} , & n \text{ is odd}.
        \end{aligned}
        \right.
        \end{equation*}
    Hence, we obtain 
    $$\pi(P)=2 \ord_P(b).$$
\end{proof}

\section{Properties of $\beta(M)$}
\label{sec:beta}
In this section, we study the function $\beta(M)$, which is defined to be $\pi(M)/\alpha(M)$.  
Recall that we have assumed $\gcd(b, M)=1$. 
We have also mentioned before that $\beta(M)$ is exactly the number of zero terms in one (minimal) period of the sequence $\{F_n\}$ modulo  $M$. 

The following theorem is an analogue of a result in the integer case (see \cite[Theorem 4]{Ren}).

\begin{theorem}  \label{thm:beta0}
We have:
    \begin{itemize}

        \item [(a)] $\beta(M) \mid 2 \ord_M(-b)$;
        
        \item [(b)] $\pi(M)=(1 \text{ or } 2) \cdot \lcm [\alpha(M),\ord_M(-b)]$.
    \end{itemize}
\end{theorem}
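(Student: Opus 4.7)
The plan is to exploit the scalar identity $U^{\alpha(M)} \equiv sI \pmod{M}$ together with the determinant $\det U = -b$, which will give a clean relation between $s$ and $-b$ that implies both parts.

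First, I would take determinants of both sides of $U^{\alpha(M)} \equiv sI \pmod{M}$. Since $U$ is a $2 \times 2$ matrix, we have $\det(sI) = s^2$, and on the other hand $\det(U^{\alpha(M)}) = (\det U)^{\alpha(M)} = (-b)^{\alpha(M)}$. This produces the key identity
\begin{equation*}
s^2 \equiv (-b)^{\alpha(M)} \pmod{M}.
\end{equation*}
This is the engine for everything that follows. Note that $\gcd(s, M) = 1$ and $\gcd(b, M) = 1$ have already been established in Section~\ref{sec:pre}, so $\ord_M(s)$ and $\ord_M(-b)$ are well-defined.

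For Part (a), I would raise the key identity to the power $\ord_M(-b)$, obtaining
\begin{equation*}
s^{2\ord_M(-b)} \equiv (-b)^{\alpha(M)\ord_M(-b)} \equiv 1 \pmod{M}.
\end{equation*}
By \eqref{eq:betas} we have $\beta(M) = \ord_M(s)$, and so $\beta(M) \mid 2\ord_M(-b)$, as required.

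For Part (b), set $L = \lcm[\alpha(M), \ord_M(-b)]$. The divisibility $L \mid \pi(M)$ is immediate: $\alpha(M) \mid \pi(M)$ by \eqref{eq:Fn0-M}, and $\ord_M(-b) \mid \pi(M)$ was proved in Section~\ref{sec:pre} using $\det U^{\pi(M)} \equiv 1$. For the reverse direction, since $\alpha(M) \mid L$ we have $U^L \equiv s^{L/\alpha(M)} I \pmod{M}$, and then $U^{2L} \equiv s^{2L/\alpha(M)} I \equiv (-b)^L I \equiv I \pmod{M}$ using the key identity and $\ord_M(-b) \mid L$. Thus $\pi(M) \mid 2L$ by \eqref{eq:UnI}, and combining with $L \mid \pi(M)$ forces $\pi(M)/L \in \{1, 2\}$, which is exactly Part (b). (In particular, Part (a) is also immediate from Part (b), since $\beta(M) = \pi(M)/\alpha(M)$ divides $2L/\alpha(M)$, which divides $2\ord_M(-b)$.)

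There is no real obstacle here — the entire argument rests on the determinant observation $s^2 \equiv (-b)^{\alpha(M)} \pmod M$, which mirrors the integer case treated in \cite{Ren}. The only point requiring mild care is to make sure that the orders $\ord_M(s)$ and $\ord_M(-b)$ are available, which is guaranteed by the coprimality facts already verified in the preliminaries.
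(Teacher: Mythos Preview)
Your proof is correct. Part (a) is identical to the paper's argument: take determinants of $U^{\alpha(M)}\equiv sI$ to get $s^{2}\equiv(-b)^{\alpha(M)}\pmod{M}$, raise to the power $\ord_M(-b)$, and invoke $\beta(M)=\ord_M(s)$.

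For Part (b), however, you take a different route. The paper equates the multiplicative orders of the two sides of $s^{2}\equiv(-b)^{\alpha(M)}$, using the standard formula $\ord_M(g^{n})=\ord_M(g)/\gcd(n,\ord_M(g))$, to obtain
\[
\frac{\beta(M)}{\gcd(2,\beta(M))}=\frac{\ord_M(-b)}{\gcd(\alpha(M),\ord_M(-b))},
\]
and then multiplies through by $\alpha(M)$ to identify the factor in front of $\lcm[\alpha(M),\ord_M(-b)]$ explicitly as $\gcd(2,\beta(M))$. Your argument is a cleaner sandwich: you show $L\mid\pi(M)$ directly from $\alpha(M)\mid\pi(M)$ and $\ord_M(-b)\mid\pi(M)$, and then prove $\pi(M)\mid 2L$ by computing $U^{2L}\equiv (s^{2})^{L/\alpha(M)}I\equiv(-b)^{L}I\equiv I$. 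This is more elementary and avoids the order-of-a-power formula entirely. The paper's version buys a little extra information (the factor is precisely $\gcd(2,\beta(M))$, so one knows it is $2$ exactly when $\beta(M)$ is even), but for the theorem as stated your approach is at least as efficient.
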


\begin{proof}
    (a) By definition, we have $U^{\alpha(M)} \equiv sI \pmod{M}$ for some polynomial $s$. 
    So, we have $ (\det U)^{\alpha(M)} \equiv s^2 \pmod{M}$, which together with  $\det U = -b$ gives 
    \begin{equation}  \label{eq:s2}
        (-b)^{\alpha(M)} \equiv s^2 \pmod{M}. 
    \end{equation}
    Now, we obtain 
    $$
       s^{2\ord_M(-b)} \equiv (-b)^{\alpha(M)\ord_M(-b)} \equiv 1 \pmod{M}, 
    $$
    which, together with  $\beta(M) = \ord_M(s)$ by \eqref{eq:betas}, yields the desired result.

    (b) Note that 
    $$
       \ord_M(s^2) = \frac{\ord_M(s)}{\gcd(2, \ord_M(s))}, 
    $$
    and
    $$   
       \ord_M((-b)^{\alpha(M)}) = \frac{\ord_M(-b)}{\gcd(\alpha(M), \ord_M(-b))}. 
    $$
    Combining this with \eqref{eq:s2}, we get 
    \begin{equation*}
       \frac{\ord_M(s)}{\gcd(2, \ord_M(s))} = \frac{\ord_M(-b)}{\gcd(\alpha(M), \ord_M(-b))}. 
    \end{equation*}
    Since $\beta(M) = \ord_M(s)$, we have 
    \begin{equation*}
       \frac{\beta(M)}{\gcd(2, \beta(M))} = \frac{\ord_M(-b)}{\gcd(\alpha(M), \ord_M(-b))}. 
    \end{equation*}
    Then, we obtain 
    \begin{align*}
       \pi(M) = \alpha(M) \beta(M) & = \gcd(2, \beta(M)) \cdot \frac{\alpha(M)\ord_M(-b)}{\gcd(\alpha(M), \ord_M(-b))} \\
       & = \gcd(2, \beta(M)) \cdot \lcm[\alpha(M),\ord_M(-b)], 
    \end{align*}
    which, together with $\gcd(2,\beta(M))=1$ or $2$, gives the desired result. 
\end{proof}

From Part (a) of Theorem~\ref{thm:beta0}, we know that if $b = -1$, then $\beta(M)$ is always 1 or 2. 
Part (b) of Theorem~\ref{thm:beta0} implies that it is quick to compute $\pi(M)$ if $\alpha(M)$ and $\ord_M(-b)$ are given. 

Recall that the analogue of the function $\beta$ in the integer case, denoted by $\omega$, has been studied in \cite{Ren}. 
By Theorem 5 of \cite{Ren}, we know that for any fixed prime $\ell$, the value $\omega(\ell^e)$ remains unchanged when $e$ is sufficiently large. 
However, this does not always hold for the function $\beta$, unless $e_i = e_i^{\prime}$ for any $i \ge 1$. 

For example, assume that $\Delta = a^2 + 4b = 0$ and  $a, b$ are not both in $\F_q$. 
Then, by Part (a) of Theorem~\ref{thm:alpha0} and Part (a) of Theorem~\ref{thm:ei'} we know that $\beta(P^e)$ goes to the infinity as $e$ tends to the infinity. 

The following theorem says that
 usually the function $\beta$ can be bounded from below and from the above. 

\begin{theorem}
Assume that $\Delta \ne 0$ and $\dfrac{a^2}{b} \not\in \F_q$. 
Assume further that $e_1/ e_1^{\prime} \le p^k$ for some integer $k \ge 1$. 
Then, for any integer $e > e_{k+2}$, we have 
$$
\beta(P) \le \beta(P^e) \le p^{k+1}\beta(P). 
$$
\end{theorem}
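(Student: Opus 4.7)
The plan is to reduce the two-sided inequality to a statement about the indices of the jump sequences $\{e_i\}$ and $\{e_i^{\prime}\}$ introduced in Sections~\ref{sec:alpha} and~\ref{sec:pi}. For a fixed $e > e_{k+2}$, let $j$ be the unique integer such that $e_{j-1} < e \le e_j$ and let $i$ be the unique integer such that $e_{i-1}^{\prime} < e \le e_i^{\prime}$ (with the convention $e_0 = e_0^{\prime} = 0$). Combining Theorem~\ref{thm:alpha} (both of its cases yield $\alpha(P^{e_j}) = p^{j-1}\alpha(P)$, since $\alpha(P)=p$ in the case $\Delta\equiv 0\pmod P$) with Theorem~\ref{thm:pi}, I would get
$$
\beta(P^e) \;=\; \frac{\pi(P^e)}{\alpha(P^e)} \;=\; \frac{p^{i-1}\pi(P)}{p^{j-1}\alpha(P)} \;=\; p^{\,i-j}\beta(P),
$$
so it remains to prove $0 \le i - j \le k+1$.

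The lower bound $i \ge j$ is immediate from Lemma~\ref{lem:eiei'}, since $e_{j-1}^{\prime} \le e_{j-1} < e \le e_i^{\prime}$ and $\{e_i^{\prime}\}$ is strictly increasing.

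For the upper bound I would reduce the inequality $i \le j+k+1$ to the single claim $e_{j+k+1}^{\prime} \ge e_j$; indeed, this gives $e \le e_j \le e_{j+k+1}^{\prime}$, which forces $i \le j+k+1$. I would verify the claim by running through the cases distinguished in Theorems~\ref{thm:ei} and~\ref{thm:ei'}. In the case $\Delta \not\equiv 0 \pmod P$ one has $e_j = p^{j-1}e_1$ and $e_{j+k+1}^{\prime} = p^{j+k}e_1^{\prime}$, so the claim reduces directly to the hypothesis $e_1/e_1^{\prime} \le p^k$. In the case $\Delta\equiv 0 \pmod P$ with $e_1^{\prime} < e_1$, combining $e_{j+k+1}^{\prime} = p^{j+k}e_1^{\prime}$ with the elementary bound $(p^j-1)/(p-1) < p^j$ again reduces the claim to the same hypothesis. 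When $e_1^{\prime}=e_1$ and $p$ is odd, the identity $e_1^{\prime} = \min\{mp,e_1\} = e_1$ (from Theorem~\ref{thm:ei'}(d) specialized to $i=1$) forces $mp \ge e_1$, and then $mp^{j+k+1} \ge p^{j+k}e_1 > e_j$, while the other branch of the minimum defining $e_{j+k+1}^{\prime}$ equals $e_{j+k+1} \ge e_j$. When $e_1^{\prime}=e_1$ and $p=2$, the case $m > e_1$ of Theorem~\ref{thm:ei'}(e) is handled directly by $e_i^{\prime} = 2^{i-1}e_1$, while the case $m=e_1$ is handled by appealing to Theorem~\ref{thm:ei'2}: if the recursion there terminates at some index $j_0 \ge 2$, I would use the bound $e_{j_0}^{\prime} \ge (2^{j_0}-2)e_1 \ge 2^{j_0-1}e_1$ together with the doubling relation $e_i^{\prime} = 2^{i-j_0}e_{j_0}^{\prime}$ for $i > j_0$ to conclude; if the recursion is infinite then $e_i^{\prime} = e_i$ identically and the claim is trivial.

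The main obstacle is the bookkeeping in the last case ($p=2$, $e_1^{\prime} = e_1$, $m = e_1$), where the structure of $e_i^{\prime}$ is only implicit and one must separate sub-subcases according to whether $j+k+1$ is strictly below, equal to, or strictly above the termination index $j_0$ of the recursion in Theorem~\ref{thm:ei'2}. In every other case the inequality $e_{j+k+1}^{\prime} \ge e_j$ is a routine comparison using only $e_1/e_1^{\prime} \le p^k$ and the elementary bound $(p^j-1)/(p-1) < p^j$.
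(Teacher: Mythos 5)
Your proof is correct and follows essentially the same route as the paper's: both reduce the two-sided bound to comparing the two jump indices, using Lemma~\ref{lem:eiei'} for one inequality and the growth estimate $e_n^{\prime} \ge p^{n-1}e_1^{\prime}$ (against $e_n \le \frac{(p^n-1)e_1}{p-1} < p^n e_1$) together with the hypothesis $e_1/e_1^{\prime} \le p^k$ for the other. The only difference is presentational: you justify the lower bound on $e_n^{\prime}$ by an explicit case analysis of Theorems~\ref{thm:ei'} and~\ref{thm:ei'2} (including the correct extraction of $mp \ge e_1$ from the $i=1$ instance of the argument for Part (d) of Theorem~\ref{thm:ei'}), where the paper asserts it in one line by citing Theorems~\ref{thm:ei} and~\ref{thm:ei'}.
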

\begin{proof}
Since $\Delta \ne 0$ and $\dfrac{a^2}{b} \not\in \F_q$ (this implies that $a$ and $b$ are not both in $\F_q$), by Lemmas~\ref{lem:ei} and \ref{lem:ei'} we know that all the integers $e_1, e_2, \ldots$ and $e_1^{\prime}, e_2^{\prime}, \ldots$ exist.

For any fixed integer $e > e_{k+2}$,  we first have $e_{i-1}^{\prime} < e \le e_{i}^{\prime}$ for some positive integer $i$. 
Then, by definition we obtain 
\begin{equation}  \label{eq:piPj}
  \pi(P^e) = \pi(P^{e_{i}^{\prime}}) = p^{i-1} \pi(P).
\end{equation}

Now, on one hand, by Lemma~\ref{lem:eiei'} we get 
$$
e \le e_i. 
$$
Since $e > e_{k+2}$, we must have $i > k+2$.
On the other hand, using Theorems~\ref{thm:ei} and \ref{thm:ei'}, we have 
$$
e > e_{i-1}^{\prime} \ge p^{i-2} e_1^{\prime} = p^{i-2} e_1 \cdot \frac{e_1^{\prime}}{e_1} \ge p^{i-k-2}e_1 > \frac{p^{i-k-2} - 1}{p-1} e_1 \ge e_{i-k-2}, 
$$
So, we obtain 
$$
e_{i-k-2} < e \le e_i, 
$$
which implies 
$$
\alpha(P^{e_{i-k-1}}) \le   \alpha(P^e) \le \alpha(P^{e_i}). 
$$
Then, we get 
\begin{equation}  \label{eq:alphaPj}
  p^{i-k-2}\alpha(P) \le \alpha(P^e) \le p^{i-1}\alpha(P).
\end{equation}

Finally, combining \eqref{eq:piPj} with \eqref{eq:alphaPj}, we obtain 
$$
\beta(P^e) = \frac{\pi(P^e)}{\alpha(P^e)} \ge \frac{p^{i-1}\pi(P)}{p^{i-1}\alpha(P)} = \beta(P), 
$$
and 
$$
\beta(P^e) = \frac{\pi(P^e)}{\alpha(P^e)} \le \frac{p^{i-1}\pi(P)}{p^{i-k-2}\alpha(P)} = p^{k+1}\beta(P).
$$
This completes the proof. 
\end{proof}

\section*{Acknowledgement}
For the research, Z. Chen and C. Wei were supported by the Guangdong College Students' 
Innovation and Entrepreneurship Training Program (No. S202210574094); 
M. Sha was supported by the Guangdong Basic and Applied Basic Research Foundation (No. 2022A1515012032).

\end{document}